\documentclass[12pt]{amsart}
\usepackage{amsmath, amsthm, amssymb, mathrsfs, latexsym, bbm, xypic, epsf, setspace}

\newif\ifpdf
\ifx\pdfoutput\undefined
\pdffalse 
\else
\pdfoutput=1 
\pdftrue
\fi

\ifpdf
\usepackage[pdftex]{graphicx}
\else
\usepackage{graphicx}
\fi

\newtheorem{thm}{Theorem}[section]

\newtheorem{defn}[thm]{Definition}

\newtheorem{cor}[thm]{Corollary}
\newtheorem{lem}[thm]{Lemma}
\newtheorem{rem}[thm]{Remark}

\newtheorem{question}[thm]{Question}

\def\Ind#1#2{#1\setbox0=\hbox{$#1x$}\kern\wd0\hbox to 0pt{\hss$#1\mid$\hss}
\lower.9\ht0\hbox to 0pt{\hss$#1\smile$\hss}\kern\wd0}
\def\Notind#1#2{#1\setbox0=\hbox{$#1x$}\kern\wd0\hbox to 0pt{\mathchardef
\nn="3236\hss$#1\nn$\kern1.4\wd0\hss}\hbox to 0pt{\hss$#1\mid$\hss}\lower.9\ht0
\hbox to 0pt{\hss$#1\smile$\hss}\kern\wd0}

\def\cl{{\rm cl}}

\def\A{\mathcal{A}}
\def\B{\mathcal{B}}
\def\C{\mathcal{C}}
\def\N{\mathbb{N}}
\def\Z{\mathbb{Z}}
\def\Cb{\bar{\mathcal{C}}}

\def\F{\mathcal{F}}
\def\a{\bar{a}}
\def\Ft{\tilde{F}}
\def\P{\mathcal{P}}
\def\H{\mathcal{H}}

\begin{document}

\title[Hrushovski Constructions and Matroids]{Matroid Theory and  Hrushovski's  Predimension Construction.\footnote{Version: 17 May 2011}}

\author{David M. Evans}
\address{School of Mathematics\\
UEA\\
Norwich NR4 7TJ\\
UK}

\email{d.evans@uea.ac.uk}

\begin{abstract}
We give an exposition of some results from matroid theory which characterise the finite pregeometries arising from Hrushovski's predimension construction. As a corollary, we observe that a finite pregeometry which satisfies Hrushovski's flatness condition arises from a predimension. 

\medskip

\noindent\textit{Keywords:\/}   Matroid, Strict gammoid, Hrushovski construction\newline
\textit{MSC(2010):\/} 05B35, 03C13, 03C30, 03C45
\end{abstract}

\maketitle

\section{Introduction}
This paper contain some observations about the class of matroids arising from Hrushovski's predimension construction in model theory. We show (Section 2) that these are precisely the strict gammoids: a class of matroids defined by J. H. Mason in the early 1970's. It then follows from a theorem of Mason that a matroid which satisfies Hrushovski's flatness condition can be obtained from a predimension (Corollary \ref{43}). We also give an extension (Theorem \ref{51}) of Mason's theorem which incorporates Hrushovski's notion of self-sufficiency (-- the `correct' notion of embedding in this context).

The paper is written from the viewpoint of a model theorist, though we do not assume any familiarity with the subject. A reader who is more familiar with matroid theory might be interested in the systematic use of the notion of self-sufficiency (which is crucial to the model-theoretic applications) and some of the consequences in later sections, which may be new observations about strict gammoids. However, the paper is largely expository and few, if any, of the results presented here are new. Its purpose is to make the connection between the predimension construction, which is of considerable significance in model theory, and an established part of matroid theory.

We use the term \textit{pregeometry} for a set with a finitary closure operation which satisfies the exchange property, and reserve the term \textit{matroid} for a finite pregeometry.

We begin with an exposition of the basic Hrushovski predimension construction from \cite{Hr}. All of this is well-known: the proofs which we have omitted can be found in \cite{Hr} or \cite{FW} (though most of them are one-line proofs which are easy to reconstruct). We use Oxley's book \cite{Oxley} as our basic reference on matroid theory. The paper \cite{IS} by Ingleton is a very clear survey of results on strict gammoids.

Suppose  $A$ is a set and $R$ is a set of finite, non-empty subsets of $A$ (in fact we can work with a multiset here, allowing repetitions of a subset). For a finite subset $X$ of $A$ we let $R[X] = \{r \in R : r \subseteq X\}$  and, as in \cite{Hr},  define the \textit{predimension} of $X$ to be 
\[ \delta(X) = \vert X \vert - \vert R[X] \vert.\]
Of course, this depends on $(A; R)$, but we suppress this in the notation wherever possible. We are interested in the class $\Cb$ of those $(A; R)$ where $\delta(X) \geq 0$ for all finite $X \subseteq A$. We call $(A; R)$ a \textit{set system} and $\delta$ its associated predimension. If $k \in \N$ we denote by $\Cb_k$ the class of those $(A; R) \in \Cb$ where all the sets in $R$ are of size at most $k$. We write $\C$ and $\C_k$ for the finite members of these classes. For uniformity of notation, we sometimes write $\C_\infty$ instead of  $\C$.

Of course, we can think of the elements of $\Cb$ as structures in a relational language $L$ which has an $n$-ary relation symbol for each $n \in \N$; for $\Cb_k$ we use the sublanguage $L_k$ having relation symbols of arity at most $k$.

\medskip

We recall some further notions and basic results from \cite{Hr}. Suppose $(A; R) \in \Cb$ and $X \subseteq A$ is finite. We write $X \leq A$ to mean that $\delta(X) \leq \delta(X')$ for all finite $X'$ with $X \subseteq X' \subseteq A$, and in this case we say that $X$ is \textit{self-sufficient} in $A$. Using the fact that $\delta$ is submodular (meaning: $\delta(X \cup Y) \leq \delta(X) + \delta(Y) - \delta(X\cap Y)$ for all $X, Y \subseteq A$), it is easy to prove that:
\begin{enumerate}
\item[1.] if $X \leq A$ and $B \subseteq A$ then $X\cap B \leq B$;
\item[2.] if $X \leq Y \leq A$ then $X \leq A$;
\item[3.] if $X, Y \leq A$ then $X\cap Y \leq A$.
\end{enumerate}

Using (1) we can extend the notion of self-sufficiency to arbitrary subsets. Say that $B \leq A$ iff $B\cap C \leq C$ for all finite $C \subseteq A$. Alternatively, $B$ is the union of its finite subsets which are self-sufficient in $A$. The above facts then hold without the assumption that $X, Y$ are finite.

Given $(A; R) \in \Cb$ and (finite) $X \subseteq A$ we define the \text{dimension} $d(X)$ of $X$ to be 
\[d(X) =  \min(\delta(Y) : X \subseteq Y \subseteq A).\]
Note that (by (2, 3) above) there exists a smallest set $Y$ with $X \subseteq Y \leq A$ and for this $Y$ we have $d(X) = \delta(Y)$.

We define the \textit{closure} of a finite subset $X$ to be 
\[\cl(X) = \{y \in A : d(X\cup\{y\}) = d(X)\}.\]
The closure of an arbitrary subset is defined to be the union of the closures of its finite subsets. It can then be shown that:
\begin{enumerate}
\item[4.] $\cl$ is a closure operation on $A$;
\item[5.] $(A, \cl)$ is a pregeometry;
\item[6.] the dimension function of the pregeometry is $d$.
\end{enumerate}
For $\A = (A; R) \in \Cb$ we denote this pregeometry by $PG(\A)$; the associated geometry (on the set of $1$-dimensional closed subsets)  is denoted by $G(\A)$. (Note that we shall reserve the term \textit{matroid} for finite pregeometries.)

If $\A = (A; R) \in \Cb$ and $B \subseteq A$ then we can consider $R\vert B = \{r \in R : r \subseteq B\}$ and look at $\B = (B; R\vert B)$ (-- the induced set system, or substucture,  on $B$; note that we previously denoted $R\vert B$ by $R[B]$). It is clear that this is in $\Cb$, and we can consider $PG(\B)$. In general this will be different from the restriction of $PG(\A)$ to the subset $B$. However, if $B \leq A$ then these will be the same. Indeed, if $X \subseteq B$, let $Y$ be the smallest subset of $A$ with $X \subseteq Y \leq A$. Then by point (3) above, $Y \leq B$ and by (2), $Y$ is the smallest subset of $B$ with $X \subseteq Y \leq B$. Thus $d(X)$ computed in $\B$ is $\delta(Y)$ and this is the same as $d(X)$ computed in $\A$.

\section{Transversals}
The \textit{dual} of a matroid is a matroid on the same set which has as its bases  the complements of the bases of the original matroid. The fact that there is such a matroid is a theorem of Whitney from 1935 (Section 2 of \cite{Oxley} is a convenient reference). It is possible to put together results from the matroid-theoretic literature to see that the matroids  of the form $PG(\A)$ for $\A \in \C$ are exactly the duals of the \textit{transversal matroids} (see 1.6 of \cite{Oxley}). We give a short proof of this result in this section.  By a theorem of Ingleton and Piff \cite{IP},  the duals of transversal matroids  are the \textit{strict gammoids} defined by Mason in \cite{Mason}. So the class of matroids $PG(\C) = \{PG(\A) : \A \in \C\}$ appears in the literature as the class of strict gammoids, or \textit{cotransversal matroids}.

Some connection between transversals and the Hrushovski constructions had already been noted and used in model theory: for example, in \cite{CHKP, DE2}. 

Suppose $A$ is a set and $R$ is a set of finite, non-empty  subsets of $A$. A \textit{transveral} of $(A; R)$ is an injective function $t : R \to A$ with $t(r) \in r$ for all $r \in R$. Abusing terminology, we shall also say that the image $t(R)$ is a transversal of $R$. The following is essentially Lemma 1.5 of \cite{DE2} (in this context, a transversal is what was called an \textit{orientation} of $(A; R)$ in \cite{DE2}). It is a simple consequence of Hall's Marriage Theorem.

\begin{lem} \label{orientation} Suppose $A$ is a finite set and $R$ is a set of non-empty  subsets of $A$. Then $(A; R) \in \C$ iff there is a transversal of $R$. Moreover, if $(A; R) \in \C$ and $B \subseteq A$, then $B \leq A$ iff any transversal of $ R\vert B$ extends to a transversal $t$  of $R$ with the property that  $t(r) \in B \Leftrightarrow r \subseteq B$ iff some transversal of $R\vert B$ extends to a transversal of $R$ in this way.
\end{lem}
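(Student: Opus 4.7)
The plan is to reduce everything to Hall's Marriage Theorem (HMT); the work lies in setting up the correct bipartite/SDR problem in each direction.

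For the first assertion, note that a transversal of $R$ is exactly a system of distinct representatives (SDR) for the family $R$, so HMT says such a $t$ exists iff for every $R'\subseteq R$ one has $\bigl|\bigcup R'\bigr|\geq |R'|$. Given a transversal, apply this to $R'=R[X]$ for finite $X\subseteq A$: since $\bigcup R[X]\subseteq X$, we get $|X|\ge|R[X]|$, i.e.\ $\delta(X)\ge 0$. Conversely, given $(A;R)\in\C$, for any $R'\subseteq R$ set $X=\bigcup R'$; then $R'\subseteq R[X]$ and $\delta(X)\ge 0$ gives $|X|\ge|R[X]|\ge|R'|$, verifying Hall's condition.

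For the second assertion, I will prove (i)$\Rightarrow$(ii)$\Rightarrow$(iii)$\Rightarrow$(i), with (ii)$\Rightarrow$(iii) trivial.

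\textbf{(i)$\Rightarrow$(ii).} Let $t_B$ be any transversal of $R\vert B$ and let $R^* = R\setminus R\vert B$. I want to extend $t_B$ to $t$ by choosing, for each $r\in R^*$, an element of $r\setminus B$, these choices being distinct. Because $t_B$ takes values in $B$, disjointness from $t_B$'s image is automatic, so it suffices to produce an SDR for the family $\{r\setminus B : r\in R^*\}$ in $A\setminus B$. To verify Hall's condition, fix $R'\subseteq R^*$, let $X=\bigcup R'$ and $Z=B\cup X$. Since $B\leq A$, $\delta(Z)\geq\delta(B)$, that is $|Z|-|R[Z]|\geq|B|-|R[B]|$. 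Now $|Z|=|B|+|X\setminus B|$ and $R[Z]\supseteq R[B]\sqcup R'$ (disjoint because every $r\in R'$ fails to be contained in $B$), so $|R[Z]|\ge|R[B]|+|R'|$. Rearranging yields $|X\setminus B|\geq|R'|$, which is precisely Hall's condition for the family $\{r\setminus B\}_{r\in R'}$. HMT produces the required extension.

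\textbf{(iii)$\Rightarrow$(i).} Suppose $t$ is a transversal of $R$ with $t(r)\in B\Leftrightarrow r\subseteq B$, and let $B\subseteq B'\subseteq A$ be finite. The restriction of $t$ to $R[B']\setminus R[B]$ is injective, and for such $r$ we have $t(r)\in r\subseteq B'$ while $r\not\subseteq B$ forces $t(r)\notin B$; hence $t$ maps $R[B']\setminus R[B]$ injectively into $B'\setminus B$. Therefore $|R[B']|-|R[B]|\leq|B'|-|B|$, which rearranges to $\delta(B)\leq\delta(B')$, giving $B\leq A$.

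The main obstacle is really just identifying the correct Hall system in the (i)$\Rightarrow$(ii) step: one must restrict attention to the relations that cross the boundary of $B$ and check Hall's condition on their residues in $A\setminus B$, using self-sufficiency of $B$ applied to $B\cup\bigcup R'$. Once this setup is in place, the estimate is a one-line bookkeeping of $\delta$.
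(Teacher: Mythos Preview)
Your proof is correct and follows essentially the same route as the paper: both directions of the first assertion are handled via Hall's condition exactly as in the paper, and your $(\mathrm{i})\Rightarrow(\mathrm{ii})$ step sets up the same Hall problem for the multiset $\{r\setminus B : r\in R,\ r\not\subseteq B\}$ and verifies the defect condition by applying $\delta(B)\leq\delta(B\cup\bigcup R')$. The paper leaves the converse $(\mathrm{iii})\Rightarrow(\mathrm{i})$ as an exercise; your argument via the injection $t:R[B']\setminus R[B]\to B'\setminus B$ is the intended one.
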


\begin{proof}
For the first statement, suppose $(A; R) \in \C$ and let $S \subseteq R$. Let $Y = \bigcup S$. To show that there is a transversal of $R$ (by applying the Marriage Theorem) we need to show that $\vert Y \vert \geq \vert S\vert$. But $\vert S \vert \leq R[Y] \leq \vert Y\vert$, as $\delta(Y) \geq 0$. Conversely if there is a transversal $t : R \to A$ then for any $X \subseteq A$ we have $\vert R[X] \vert = \vert t(R[X]) \vert \leq \vert \bigcup R[X] \vert \leq \vert X \vert$. So $\delta(X) \geq 0$. 

Now suppose $(A; R) \in \C$ and $B \subseteq A$. Then $(B; R\vert B) \in \C$ so there is a transversal $t_0$ of $R\vert B$ (by the first part). Suppose $B\leq A$. To show that $t_0$ extends to a transversal of $R$ of the required form, consider $\{ r\setminus B : r \in R, r\not\subseteq B\}$ (as a multiset). We need to show that if $S$ is a subset of this, then $\vert \bigcup S \vert \geq \vert S \vert$. Let $Y  = B \cup \bigcup S$. Then $\vert S \vert \leq \vert R[Y]\vert - \vert R[B]\vert  = \delta(B) - \delta(Y) + \vert \bigcup S \vert$. As $\delta(B) \leq \delta(Y)$, this gives what we want.

We leave the rest (the converses) as an easy exercise.
\end{proof}

The following (together with Whitney's Theorem) shows that the matroids $PG(\C)$  are the cotransversal matroids.

\begin{thm}\label{cotrans}
Suppose $\A = (A; R) \in \C$. Then $Y \subseteq A$ is a basis of $PG(\A)$ iff $A\setminus Y$ is a transversal of $R$.
\end{thm}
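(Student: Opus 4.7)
The plan is to use that $d(A) = \delta(A) = |A| - |R|$ (since the only set between $A$ and $A$ is $A$ itself), so any basis of $PG(\A)$ has size $|A|-|R|$, and any transversal $t : R \to A$ has $|t(R)| = |R|$ by injectivity; hence in both directions $|Y| = |A|-|R|$, and it suffices to show independence (or spanning) of $Y$ for the other half of the basis property.

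For the forward direction, suppose $Y$ is a basis of $PG(\A)$. I plan to verify Hall's condition for the bipartite graph on $R$ and $A \setminus Y$ where $r$ is joined to $a$ iff $a \in r$; a transversal $t : R \to A$ with image $A \setminus Y$ is exactly a matching saturating $R$. Given $S \subseteq R$, set $Z = \bigcup S$ and consider $Y \cup Z$. Since $Y$ spans $A$, we have $d(Y \cup Z) = d(Y) = |Y|$, while $d(Y \cup Z) \leq \delta(Y \cup Z) \leq |Y \cup Z| - |S|$ because $S \subseteq R[Y \cup Z]$. This yields $|S| \leq |Y \cup Z| - |Y| = |Z \setminus Y|$, which is exactly Hall's condition.

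For the converse, suppose $t : R \to A \setminus Y$ is a transversal. To see that $Y$ is independent, I need $d(Y) = |Y|$, i.e., $\delta(Y') \geq |Y|$ for every $Y'$ with $Y \subseteq Y' \subseteq A$. This rearranges to $|R[Y']| \leq |Y' \setminus Y|$. But the restriction $t \upharpoonright R[Y']$ is injective and lands in $Y' \cap (A \setminus Y) = Y' \setminus Y$: for each $r \in R[Y']$ we have $t(r) \in r \subseteq Y'$ and $t(r) \in A \setminus Y$. Hence $|R[Y']| \leq |Y' \setminus Y|$, as required.

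The main obstacle I anticipate is conceptual rather than technical: recognising that the inequality defining self-sufficiency/independence (namely $\delta(Y') \geq |Y|$ for all $Y' \supseteq Y$) is literally Hall's condition for the bipartite incidence graph between $R$ and $A \setminus Y$. Once this translation is made, both directions are one-line submodular/counting computations, and no appeal to Lemma \ref{orientation} beyond its spirit is needed.
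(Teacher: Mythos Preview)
Your proof is correct. Both directions go through cleanly: in the forward direction, the inequality $|S| \le |Z\setminus Y|$ is exactly Hall's condition for a system of distinct representatives of $R$ inside $A\setminus Y$, and the cardinality count $|R| = |A|-|Y|$ forces the image to be all of $A\setminus Y$; in the converse, the injection $t\upharpoonright R[Y']\hookrightarrow Y'\setminus Y$ gives $\delta(Y')\ge |Y|$ for every $Y'\supseteq Y$, i.e.\ $d(Y)=|Y|$, and since $|Y|=d(A)$ this makes $Y$ a basis.

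The paper's argument is organised differently: it routes both directions through Lemma~\ref{orientation}, which packages the Hall-type computation into the statement that $B\le A$ iff any transversal of $R\vert B$ extends to a transversal of $R$ avoiding $B$. Thus for the converse the paper observes $R\vert Y=\emptyset$ and invokes the lemma to get $Y\le A$, and for the forward direction it first deduces $Y\le A$ and $R\vert Y=\emptyset$ from $d(Y)=\delta(Y)=|Y|$, then applies the lemma to extend the empty transversal. Your approach bypasses Lemma~\ref{orientation} entirely and applies Hall's Theorem directly to the bipartite graph on $R$ and $A\setminus Y$, which makes the argument more self-contained and slightly more elementary; the paper's version has the advantage of establishing the self-sufficiency of $Y$ explicitly, a fact used again in Corollary~\ref{basis}.
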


\begin{proof}
Suppose $t : R \to A$ is a transversal with image $A\setminus Y$. Then $R\vert Y$ is empty and so $\delta(Y) = \vert Y\vert$. Also, $t$ extends a transversal of $R \vert Y$ (trivially!), so $Y \leq A$. Now, $\delta(A) = \vert A \vert - \vert R \vert = \vert A \vert - \vert A \setminus Y\vert = \vert Y\vert$. So $A = \cl(Y)$, and $Y$ is a basis for $PG(\A)$. 

Conversely suppose $Y$ is a basis for $A$. So $\vert Y \vert  = d(A)  = d(Y) \leq \delta(Y) \leq \vert Y \vert$. It follows that $R \vert Y$ is empty and $Y \leq A$. By the Lemma, there is a transversal $t : R \to A$ with image in $A\setminus Y$. But $\vert R \vert = \vert A \vert - d(A) = \vert A\vert - \vert Y \vert$. So $t$ has image $A\setminus Y$, and the latter is therefore a transversal of $R$.
\end{proof}

We will use the following in the next section (it is essentially from \cite{Mason}).

\begin{cor}\label{basis} 
Suppose $(A; R) \in \C$ and $t: R \to A$ is a transversal with image $A\setminus Y$. Suppose $F\leq A$. Then $$Z = \{ f \in F\setminus Y: t^{-1}(f) \not\subseteq F\}\cup (F\cap Y)$$ is a basis for $F$.
\end{cor}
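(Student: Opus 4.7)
The plan is to reduce this to Theorem \ref{cotrans} applied to the induced set system $\F = (F; R\vert F)$, using the observation from the end of Section 1 that $F \leq A$ makes the pregeometry $PG(\F)$ coincide with the restriction of $PG(\A)$ to $F$. So showing $Z$ is a basis for $F$ in $PG(\A)$ reduces to showing $Z$ is a basis for $PG(\F)$. By Theorem \ref{cotrans}, this in turn reduces to showing that $F \setminus Z$ is a transversal of $R\vert F$.

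First I would just compute $F \setminus Z$ directly from the definition. Writing $F$ as the disjoint union $(F \cap Y) \sqcup (F \setminus Y)$ and removing the two pieces of $Z$, one gets
\[ F \setminus Z = \{f \in F \setminus Y : t^{-1}(f) \subseteq F\}. \]
Here $t^{-1}(f)$ makes sense for $f \in F \setminus Y$ because the image of $t$ is $A \setminus Y \supseteq F \setminus Y$.

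Next I would produce the required transversal. The natural candidate is just the restriction $t\vert_{R\vert F}$. For any $r \in R\vert F$, we have $t(r) \in r \subseteq F$ and $t(r) \in A \setminus Y$, so $t(r) \in F \setminus Y$; and $t^{-1}(t(r)) = r \subseteq F$, so $t(r) \in F \setminus Z$. Conversely, any $f \in F \setminus Z$ is equal to $t(r)$ for the unique $r \in R$ with $t(r) = f$, and the condition $t^{-1}(f) = r \subseteq F$ says exactly that $r \in R\vert F$. Thus $t\vert_{R\vert F} : R\vert F \to F$ is injective, satisfies $t(r) \in r$, and has image precisely $F \setminus Z$ — a transversal of $R\vert F$, as required.

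There is no real obstacle here; the content is just careful bookkeeping of the defining conditions for $Z$. The only subtle point is conceptual: one has to notice that the hypothesis $F \leq A$ is what licences the passage from "basis of $PG(\F)$" to "basis of $F$ in $PG(\A)$", since otherwise these two matroid structures on $F$ could disagree. Once that is in place, the membership of $f$ in $F \setminus Z$ is engineered exactly so that $t^{-1}$ takes it into $R\vert F$, which is what makes the restricted $t$ a transversal of the restricted system.
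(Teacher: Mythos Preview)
Your proof is correct and follows essentially the same approach as the paper: reduce to $PG(F; R\vert F)$ using $F \leq A$, observe that $t$ restricted to $R\vert F$ is a transversal with image $F\setminus Z$, and apply Theorem~\ref{cotrans}. You have simply made explicit the bookkeeping (the computation of $F\setminus Z$ and the verification of the image of the restricted transversal) that the paper leaves to the reader.
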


\begin{proof}
We know that, because $F \leq A$,  the restriction of the pregeometry $PG(A; R)$ to $F$ is the pregeometry $PG(F; R\vert F)$. Now, $t$ restricted to $R\vert F$ is still a transversal; its image is $F \setminus Z$, so by Theorem \ref{cotrans}, $Z$ is a basis of $PG(F; R\vert F)$. 
\end{proof}

\begin{rem}\rm If $(A; R')$ is a finite set system, then the  associated  \textit{transversal matroid} has as its independent subsets the (images of) partial transversals of $(A; R')$ (see \cite{Oxley}, 1.6). There is no requirement here that $(A; R') \in \C$: however, 2.4.1 of \cite{Oxley} shows that there is a subset $R \subseteq R'$ with  $(A; R) \in \C$  such that $(A;R)$ has the same associated transversal matroid as $(A; R')$. 

In fact, the definition of transversal matroid given in \cite{Oxley} is apparently more general: one works with partial transversals of $(A; R')$ where $R'$ is a multiset. Of course, we can adapt the definition of predimension to accommodate this and the above results still hold. However, it is fairly easy to show that if $R'$ is a multiset of subsets of $A$ and $(A; R') \in \C$, then there is a set $R$ of subsets of $A$ such that $(A; R) \in \C$ and $PG(A; R') = PG(A; R)$.
\end{rem}

\begin{rem}\rm For completeness, we give the definition of strict gammoids from \cite{Mason}. Suppose $\Gamma = (A; D)$ is a directed graph (without loops) with vertices $A$ and directed edges $D$. Suppose $B$ is a subset of $A$. In the \textit{strict gammoid} on $A$ determined by these, a subset $C \subseteq A$ in independent iff it is \textit{linked} to a subset of $B$: this means that there is a set of disjoint directed paths with the vertices in $C$ as initial nodes and whose terminal nodes are in $B$. 

Suppose $(A; R) \in \C$ and $t : R \to A$ is a transversal with image $A\setminus B$. Define a directed graph $\Gamma$ on $A$ with directed edges $\{ (t(r), c) : r \in R, \,\,  c \in r, \,\, c\neq t(r)\}$. Then it can be shown that $PG(A; R)$ is the strict gammoid given by $\Gamma$ and $B$.
\end{rem}

\section{The $\alpha$-function}
In this sections we give Mason's  characterization of the strict gammoids (-- that is, the matroids $PG(\C)$) from \cite{Mason}. A slight modification to the proof allows us to give a charcterization of the matroids $PG(\C_k)$. Almost all of the following is taken from \cite{Mason}, but we have adapted it to deal directly with $PG(\C)$ rather than going via linkages in directed graphs (as in the original presentation).

\begin{defn}\rm
(1)  Suppose $(A, \cl)$ is a matroid with dimension function $d$. If $F$ is a closed subset of  $A$ (sometimes called  a \textit{flat} in $A$), then we write $F \sqsubseteq A$ to indicate this. More generally, and slightly ambiguously, if $X \subseteq A$ we write $F \sqsubseteq X$ to mean that $F$ is a closed subset of  $A$ and $F \subseteq X$. With strict containment, we write $F \sqsubset X$. \newline
(2)  The \textit{$\alpha$-function} of $(A, \cl)$ is defined (inductively on dimension) on unions of closed sets in $A$ by the rule:
\[\alpha(X) = \vert X \vert - d(X) - \sum_{G \sqsubset X} \alpha(G)\]
when $X$ is a union of closed sets.
\end{defn}

When $X$ is a closed set $F$ we can rewrite this as:
\[ \sum_{G \sqsubseteq F} \alpha(G) = \vert F \vert - d(F)\]
and this formula can be inverted using the M\"obius function of the lattice of closed stes. Note however that in what follows it will be important to consider $\alpha(X)$ when $X$ is \textit{not} a closed set.

Clearly $\alpha(\cl(\emptyset)) = \vert \cl(\emptyset)\vert$ and if $F$ is a point (that is, a closed set of dimension 1) then $\alpha(F) = \vert F \setminus \cl(\emptyset)\vert -1$. We can normalise by passing to the geometry $(\hat{A}, \cl)$ of $A$ and considering its $\alpha$-function. Thus, in the geometry we have $\alpha(\emptyset) = 0$, $\alpha(p) =0$ for a point $p$, and $\alpha(\ell) = \vert \ell\vert -2$ for a line $l$. A straightforward induction on  $d(X)$ shows that if $X$ is a closed set and $d(X) \geq 2$ then $\alpha(X) = \alpha(\hat{X})$, where $\hat{X}$ is the image of $X$ in the geometry.

\begin{defn}\rm  Suppose $(A, \cl)$ is a matroid and let $\F$ denote the set of closed sets in $(A, \cl)$. Suppose $\gamma: \F \to \Z$ is such that $\gamma(F) \geq 0$ for all $F \in \F$. By a  \textit{$\gamma$-transversal} of $\F$ we mean a collection $(X_F : F \in \F)$ of pairwise disjoint sets such that  $X_F$ is a subset of $F$ of size $\gamma(F)$, for each $F \in \F$.
\end{defn}

\begin{thm}[Mason, \cite{Mason}] \label{32} Suppose $(A, \cl)$ is a matroid. The following are equivalent:
\begin{enumerate}
\item Whenever $X \subseteq A$ is a union of closed sets, then $\alpha(X) \geq 0$.
\item There is a set $R$ of non-empty subsets of $A$ such that $\A = (A; R) \in \C$ and $PG(\A) = (A, \cl)$.
\item There is an $\alpha$-transversal of the set of closed subsets of $A$.
\end{enumerate}
Moreover, we can take $R$ in (2) to be a set of subsets of size at most $k$ iff $\alpha(F) = 0$ for all closed sets $F$ of dimension at least $k$.\end{thm}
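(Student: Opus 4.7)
My plan is to exploit the decomposition $R = \bigsqcup_{F \in \F} R_F$, where $R_F = \{r \in R : \cl(r) = F\}$, as the bridge between the $\alpha$-function and the relational structure. The central preliminary observation, to be established first, is: if $\A = (A; R) \in \C$ with $PG(\A) = (A, \cl)$, then $|R_F| = \alpha(F)$ for every flat $F$. This follows from the fact that flats of $PG(\A)$ are automatically self-sufficient in $\A$ (since $\cl(X)$ equals the minimal $\leq$-closed superset of $X$), so $\delta(F) = d(F)$ and thus $\sum_{G \sqsubseteq F} |R_G| = |R[F]| = |F| - d(F) = \sum_{G \sqsubseteq F} \alpha(G)$; M\"obius inversion on the lattice of flats then yields $|R_F| = \alpha(F)$. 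This identity drives everything that follows.

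Given this, $(2) \Rightarrow (3)$ is immediate: any transversal $t$ of $R$ projects to an $\alpha$-transversal by $X_F := t(R_F) \subseteq F$, pairwise disjoint by injectivity. For $(2) \Rightarrow (1)$, when $X$ is a union of closed sets but not itself closed, $\sum_{G \sqsubset X} \alpha(G) = \sum_{G \sqsubseteq X} |R_G| = |\{r \in R : \cl(r) \sqsubseteq X\}| \leq |R[X]|$, hence
\[ \alpha(X) \;\geq\; |X| - d(X) - |R[X]| \;=\; \delta(X) - d(X) \;\geq\; 0, \]
using $d(X) = d_R(X) \leq \delta(X)$; for closed $X$, $\alpha(X) = |R_X| \geq 0$ directly. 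For $(1) \Rightarrow (3)$, I would apply Hall's marriage theorem to the multifamily in which each flat $F$ appears $\alpha(F)$ times; Hall's condition reduces to $|\bigcup_{F \in \mathcal{G}} F| \geq \sum_{F \in \mathcal{G}} \alpha(F)$ for every $\mathcal{G} \subseteq \F$. Setting $X = \bigcup \mathcal{G}$, we have $\sum_{F \in \mathcal{G}} \alpha(F) \leq \sum_{G \sqsubseteq X} \alpha(G)$ (using $\alpha \geq 0$ on flats, a special case of (1)), and unfolding $\alpha(X) \geq 0$ yields $\sum_{G \sqsubseteq X} \alpha(G) \leq |X| - d(X) \leq |X|$ in both the closed and non-closed cases.

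The main step is $(3) \Rightarrow (2)$, together with the moreover clause. Given an $\alpha$-transversal $(X_F)_{F \in \F}$, for each $F$ with $\alpha(F) > 0$ (which forces $|F| > d(F)$) and each $x \in X_F$, I would pick a relation $r_x \subseteq F$ of size exactly $d(F) + 1$ that contains $x$ and spans $F$. Let $R$ be the multiset of all such $r_x$. The map $r_x \mapsto x$ is an injection with $x \in r_x$, so by Lemma \ref{orientation}, $(A; R) \in \C$. The choice $|r_x| = d(\cl(r_x)) + 1$ places $R$ in $[A]^{\leq k}$ exactly when $d(F) < k$ for every $F$ with $X_F \neq \emptyset$, giving one direction of the moreover clause; the converse follows from $|R_F| = \alpha(F)$ together with the lower bound $|r| \geq d(\cl(r)) + 1$ for any $r \in R$ (since $r \in R[r]$ gives $\delta(r) \leq |r|-1$, while $d_R(r) = d(\cl(r))$ forces $\delta(r) \geq d(\cl(r))$). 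The upper bound $d_R \leq d$ in the equality $PG(A; R) = (A, \cl)$ then follows from $|R[F]| = \sum_{G \sqsubseteq F} |X_G| = |F| - d(F)$, so $\delta(F) = d(F)$ for each flat and $d_R(X) \leq \delta(\cl(X)) = d(X)$ for every $X$.

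The hard part, and the main obstacle, is the reverse bound $d_R \geq d$: equivalently, that every flat $F$ of $(A, \cl)$ is self-sufficient in the constructed $(A; R)$. By Lemma \ref{orientation} this amounts to producing, for each $F$, a transversal of $R$ whose image meets $F$ precisely on $R|F$. The naive choice $t(r_x) = x$ can fail, because some $x \in X_G$ with $G \not\sqsubseteq F$ may nevertheless lie in $F$. My plan is to apply Hall's theorem to the family $(r_x \setminus F : \cl(r_x) \not\sqsubseteq F)$ and show $|\bigcup_{r_x \in S}(r_x \setminus F)| \geq |S|$ for every subfamily $S$, by applying the predimension inequality $\delta \geq 0$ to $F \cup \bigcup S$ and exploiting the spanning constraint $\cl(r_x) \not\subseteq F$ (which rules out $r_x$ having all of its $d(\cl(r_x))$ spanning elements inside $F$). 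If this direct route proves too delicate, I would fall back on Mason's original approach via linkages in the directed graph $\Gamma$ mentioned in the Remark following Corollary \ref{basis}, which realises the strict gammoid structure directly and circumvents the need to verify self-sufficiency by hand.
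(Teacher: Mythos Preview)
Your treatment of $(2)\Rightarrow(1)$, $(2)\Rightarrow(3)$, $(1)\Rightarrow(3)$ and the moreover clause is correct; the identity $|R_F|=\alpha(F)$ via M\"obius inversion is in fact a tidy repackaging of the paper's inductive computation of $\beta(F)$. The gap is in $(3)\Rightarrow(2)$. Your construction allows an \emph{arbitrary} spanning set $r_x\subseteq F$ of size $d(F)+1$, and this is too loose. Take $A=\{1,2,3,4,5\}$ of rank $3$ with a single long line $\ell=\{1,2,3\}$; then $\alpha(\ell)=\alpha(A)=1$ and all other $\alpha$'s vanish. With $X_\ell=\{3\}$, $X_A=\{5\}$, your recipe forces $r_3=\{1,2,3\}$ but permits $r_5=\{1,2,3,5\}$. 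For $R=\{r_3,r_5\}$ one gets $\delta(\{1,2,3,5\})=2<3=d(\{1,2,3,5\})$, so $PG(A;R)\neq(A,\cl)$. Worse, in this $(A;R)$ every flat of $(A,\cl)$ \emph{is} self-sufficient (your Hall condition holds for each $F$), yet $d_R\not\geq d$. So the equivalence you assert---``$d_R\geq d$ iff every flat is $\leq_R$-self-sufficient''---is false, and your fallback plan would not rescue the argument either: the problem is not that self-sufficiency of flats is hard to verify, but that it is insufficient.

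The paper's construction avoids this by a coordinated choice: for $a\in X_F$ it takes $R_a=\{a\}\cup S(F)$ with $S(F)=F\setminus\bigcup_{G\sqsubseteq F}X_G$, a single canonical basis of $F$ depending only on the $\alpha$-transversal. The nesting $S(F)\cap G\subseteq S(G)$ for $G\sqsubseteq F$ is what drives Claim~3 (if $S(G)\subseteq C\leq G$ then $\delta(C)=\delta(G)$), because every $a\in G\setminus C$ lies in some $X_H$ with $H\sqsubseteq G$ and so contributes a relation $R_a\in R[G]\setminus R[C]$. Claim~4 then bootstraps this to $\delta(X)\geq d(X)$ for arbitrary $X$, which is the statement actually needed. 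In your counterexample the bad choice $r_5$ smuggles a second relation into $\ell\cup\{5\}$; the paper's $S(A)=\{1,2,4\}$ prevents exactly this.
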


\begin{proof}
\textbf{$(2) \Rightarrow (1)$:\/} Suppose (2) holds and let $t : R \to A$ be a transversal (Lemma \ref{orientation}) with image $A \setminus Y$. For a flat $F$ define $\beta(F) =\vert\{ x \in F\setminus Y : \cl(t^{-1}(x)) = F\}\vert$. We prove by induction on $d(F)$ that $\beta(F) = \alpha(F)$. If $F = \cl(\emptyset)$ then a basis for $F$ is empty. So by Corollary \ref{basis}, $F \subseteq A\setminus Y$ and $t^{-1}(x) \subseteq F$ for all $x \in F$: thus $\beta(F) = \vert \cl(\emptyset)\vert = \alpha(F)$. In general suppose we have the claim for $G \sqsubset F$. Note that $F$ is the disjoint union of subsets:
\begin{enumerate}
\item[(i)] $\{ x \in F\setminus Y : \cl(t^{-1}(x)) \not\subseteq F\}\cup (F \cap Y)$;
\item[(ii)]  $\{ x \in F \setminus Y : \cl(t^{-1}(x)) = F\}$;
\item[(iii)] $\{ x \in F \setminus Y : \cl(t^{-1}(x)) \sqsubset F\}$.
\end{enumerate}
The first of these has size $d(F)$ (by Corollary \ref{basis}); the second has size $\beta(F)$. By the inductive hypothesis, the third has size $\sum_{G \sqsubset F} \alpha(G)$. Thus 
$\beta(F) = \vert F\vert - d(F) - \sum_{G\sqsubset F} \alpha(G) = \alpha(F)$, as required. So of course, this shows that $\alpha(F) \geq 0$ if $F$ is a flat. Now suppose that $X$ is a union of flats (but is not a flat). Then $\alpha(X) = \vert X \vert -d(X) - \sum_{F \sqsubseteq X} \beta(F)$. The sum in this is equal to $\vert\{ x \in X \setminus Y: \cl(t^{-1}(x)) \sqsubseteq X\}\vert$ and this is at most $\vert R[X]\vert$. So $\alpha(X) \geq \vert X \vert - \vert R[X]\vert - d(X)= \delta(X) - d(X) \geq 0$. 

\medskip

\textbf{$(1) \Rightarrow (3):$\/} Suppose (1) holds. Let $\F$ denote the set of closed sets in $(A, \cl)$. We first show that there is an $\alpha$-transversal of $\F$. By a generalization of Hall's Marriage Theorem (quoted in \cite{Mason} as due to Welsh, but attributed to Halmos and Vaughan in \cite{Mir}), it will suffice to prove that for distinct $F_1, \ldots, F_r \in \F$ we have $\vert \bigcup_{i\leq r} F_i\vert \geq \sum_{i\leq r} \alpha(F_i)$. If the union is not one of the $F_i$, then 
\[\vert \bigcup _i F_i \vert = \alpha(\bigcup_i F_i) + d(\bigcup_i F_i)+ \sum_{F \sqsubset \bigcup_i  F_i} \alpha(F)\geq \sum_i \alpha(F_i)\]
using $\alpha \geq 0$. If $\bigcup_i F_i = F_r$ then a similar argument gives what we want.

\textbf{$(3) \Rightarrow (2):$\/}   As before, let $\F$ denote the set of closed subsets in $(A, \cl)$ and suppose $(X_F : F\in \F)$ is an $\alpha$-transversal of $\F$ (so, implicit in  this is that $\alpha(F) \geq 0$ for all $F \in \F$).

Now let $Y = A \setminus \bigcup_{F \in \F} X_F$. If $F \in \F$ and $a \in X_F$ then let $S(F) = F \setminus \bigcup_{G\sqsubseteq F} X_G$ and $R_a = \{a\}\cup S(F)$. Note that these are distinct (for example, this follows from Claim 1 below). We let $R = \{R_a : a \in X_F, \, F \in \F\}$. In a series of claims we show that $(A; R)$ satisfies (2). Let $\delta$ denote the predimension coming from $(A;R)$. Of course, we have a transversal $t: R \to A$ (given by $t(R_a) = a$), so $(A; R) \in \C$. 

\smallskip

\noindent\textit{Claim 1:\/} $S(F)$ is a basis for $F$. \newline
Note that $\vert S(F)\vert  = \vert F \vert - \sum_{G \sqsubseteq F} \alpha(G) = d(F)$. So it will suffice to show that $S(F)$ is not contained in $G$ when $G \sqsubset F$. By definition of $S(G)$ we have  $S(F) \cap G \subseteq S(G)$. As $S(G)$ has size $d(G) < d(F)$ we therefore cannot have $S(F) \subseteq G$. 

\smallskip

\noindent\textit{Claim 2:\/} If $F$ is a closed set then $\delta(F) = d(F)$. \newline
First suppose $a \in X_G$ and $R_a \subseteq F$. Then $G = \cl(R_a) \subseteq F$. On the other hand, if $G \sqsubseteq F$ and $a \in X_G$ then clearly $R_a \subseteq F$. Thus $\vert R[F] \vert  = \sum_{G \sqsubseteq F} \alpha(G) = \vert F \vert - d(F)$. So $d(F) = \delta(F)$.

\smallskip 

\noindent\textit{Claim 3:\/} If $G$ is a closed set and $S(G) \subseteq C \leq G$, then $\delta(C) = \delta(G)$.\newline
If $a \in G\setminus C$ then $a \in X_H$ for some $H\sqsubseteq G$, thus $R_a \in R[G]\setminus R[C]$. Thus $\vert G \vert -\vert C\vert \leq \vert R[G] \vert -\vert R[C]\vert$. So $\delta(G) \leq \delta(C)$. As $C \leq G$ we get equality here.

\smallskip

\noindent\textit{Claim 4:\/} If $X \subseteq A$ then $\delta(X) \geq d(X)$.\newline
Let $F = \cl(X)$. We can assume that $X \leq F$ (if there is $Y$ with $X \subseteq Y \subseteq F$ and $\delta(Y) < \delta(X)$ then $\cl(Y) = F$ and it will suffice to prove $\delta(Y) \geq d(F)$). Suppose $G \sqsubseteq F$ and $S(G) \subseteq X$. We show that $\delta(X\cup G) = \delta(X)$. Let $C = X \cap G$. By Claim 3, we have $\delta(C) = \delta(G)$. By submodularity, we then have $\delta(X \cup G) \leq \delta(X)+\delta(G) - \delta(C) = \delta(X)$. Again, the fact that $X \leq F$ gives the equality. We may therefore assume, without changing the value of $\delta(X)$,  that  if $S(G) \subseteq X$ then $G \subseteq X$. It then follows that  $\vert R[X]\vert = \sum_{G: S(G) \sqsubseteq X} \vert X_G\vert = \sum_{G: S(G) \sqsubseteq X} \alpha(G)$, and moreover the latter is equal to  $\sum_{G \sqsubseteq X} \alpha(G)$. But this is $\vert X \vert - d(X)$: so $\delta(X) = d(X)$, as required.

This finishes the proof that $PG(A; R) = (A, \cl)$: if $X \subseteq A$ then $d(X) = \min(\delta(Y) : X \subseteq Y \subseteq A)$, by claims 3 and 4.

For the `moreover' part note that from the first part of the proof, if all sets in $R$ are of size at most $k$, then $\beta(F) = 0$ whenever $d(F) \geq k$ (because if $r \in R$ then $d(\cl(r)) \leq \delta(r) \leq \vert r \vert -1$). Thus the same is true for $\alpha(F)$. Conversely, if $\alpha(F) = 0$ whenever $d(F) \geq k$ then the construction in the step $(3) \Rightarrow (2)$ of the proof has all of the $R_a$ of size at most $k$.
\end{proof}

We refer to a set $R$ such that $PG(A; R) = (A, \cl)$ as a \textit{presentation} of the pregeometry $(A, \cl)$.

\begin{rem}\rm
The steps $(2)\Rightarrow (1) \Rightarrow (3)$ in the above are essentially as in  Mason's paper. The construction of the presentation in $(3) \Rightarrow (2)$ is different from the corresponding step in Mason's argument (and results in a presentation with smaller sets, giving the `Moreover' part of the result). In fact, the presentation constructed here is a minimal presentation, rather than the maximal presentation given in Mason's paper (cf. \cite{IS}, Section 3). 
\end{rem}

\section{Flatness}

In Section 4.2 of \cite{Hr}, Hrushovski introduces the notion of a (not necessarily finite) pregeometry being \textit{flat}.

\begin{defn}\label{41} \rm
Suppose $(A, \cl)$ is a pregeometry with dimension function $d$ and $\F = \{F_i : i \in I\}$ is a non-empty, finite set of (distinct) finite-dimensional closed sets. If $\emptyset \neq S \subseteq I$ let $F_S = \bigcap_{i \in S} F_i$ and let $F_{\emptyset} = \bigcup_{i \in I} F_i$. Let 
$$\Delta(\F) = \sum_{S \subseteq I} (-1)^{\vert S\vert} d(F_S).$$
We say that $(A, \cl)$ is \textit{flat} if $\Delta(\F) \leq 0$ for all such $\F$.
\end{defn}

Lemma 15 of \cite{Hr} (or rather, its proof) shows that if $(A; R) \in \Cb$,  then $PG(A; R)$ is flat. We prove the converse of this for finite pregeometries. First we connect $\Delta$ with the $\alpha$-function. (Similar results, but in the dual context of transversal matroids appear in \cite{IS}, Section 3.)

\begin{lem}\label{42}
Suppose $(A, \cl)$ is a finite pregeometry (with dimension function $d$). Let $X \subseteq A$ be a union of closed sets (with $d(X) \geq 2$) and define $\F(X) = \{ F : F \mbox{ is a closed set in $A$ and } F \subset X\}$. Then 
\[ \alpha(X) = - \Delta(\F(X)).\]
\end{lem}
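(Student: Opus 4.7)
The plan is to expand both sides in terms of $|F_S|$, $d(F_S)$, and $\alpha(H)$ for intersections of flats in $\F(X)$, and then match them via inclusion-exclusion together with the recursion defining $\alpha$ on flats. Write $\F(X) = \{F_1, \dots, F_r\}$ indexed by $I = \{1, \dots, r\}$, so that $F_\emptyset = \bigcup_{i \in I} F_i$.

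First I would verify that $X = \bigcup_{i \in I} F_i$. If $X$ is not itself closed, this is immediate from the hypothesis that $X$ is a union of closed sets, since those summands automatically lie in $\F(X)$. If $X$ is closed, then the assumption $d(X) \geq 2$ guarantees that each point-closure $\cl(\{x\})$ is a proper flat of $X$ and therefore belongs to $\F(X)$. Ordinary inclusion-exclusion then yields
\[
|X| = \sum_{\emptyset \neq S \subseteq I}(-1)^{|S|+1}|F_S|.
\]

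Next I would unfold $-\Delta(\F(X)) = -d(X) + \sum_{\emptyset \neq S}(-1)^{|S|+1}d(F_S)$ and compare with the defining relation $\alpha(X) = |X| - d(X) - \sum_{G \sqsubset X}\alpha(G)$. Using the $|X|$-identity to cancel the $|X|$ term, the claim $\alpha(X) = -\Delta(\F(X))$ reduces to
\[
\sum_{G \sqsubset X}\alpha(G) = \sum_{\emptyset \neq S \subseteq I}(-1)^{|S|+1}\bigl(|F_S| - d(F_S)\bigr).
\]
Since each $F_S$ with $S \neq \emptyset$ is a flat (intersections of flats being flats), the recursion $|F| - d(F) = \sum_{H \sqsubseteq F}\alpha(H)$ rewrites the right-hand side as a double sum; swapping the order of summation expresses it as $\sum_{H \text{ flat}}\alpha(H)\cdot c(H)$, where $c(H) = \sum_{\emptyset \neq S \subseteq J(H)}(-1)^{|S|+1}$ for $J(H) = \{i \in I : H \subseteq F_i\}$. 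The standard inclusion-exclusion identity gives $c(H) = 1$ if $J(H) \neq \emptyset$ and $0$ otherwise. To close the argument one observes that for a flat $H$, the condition $J(H) \neq \emptyset$ is equivalent to $H \in \F(X)$: indeed, $H \subseteq F_i$ forces $H \subsetneq X$ and so $H \in \F(X)$, while conversely any flat in $\F(X)$ is itself one of the $F_i$. Hence the right-hand side collapses to $\sum_{H \in \F(X)}\alpha(H) = \sum_{G \sqsubset X}\alpha(G)$, matching the left-hand side.

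The main delicate point is the opening step $X = \bigcup\F(X)$, which is the only place the hypothesis $d(X) \geq 2$ enters; after that the computation is essentially bookkeeping, though the summation-swap needs to be organized carefully so that one sees the coefficient $c(H)$ vanishes off $\F(X)$.
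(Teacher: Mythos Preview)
Your proof is correct and follows essentially the same approach as the paper: both arguments hinge on the inclusion--exclusion identity $\sum_{S}(-1)^{|S|}|F_S|=0$, the recursion $|F|-d(F)=\sum_{H\sqsubseteq F}\alpha(H)$ for flats, and the observation that every proper flat $H\subset X$ satisfies $J(H)\neq\emptyset$ (since $H$ is itself one of the $F_i$). Your write-up is arguably a bit cleaner in that you work directly with the solved form of the $\alpha$-recursion and explicitly isolate where the hypothesis $d(X)\geq 2$ is used to ensure $X=\bigcup\F(X)$, which the paper takes for granted.
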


\noindent\textit{Remark:\/} The strict containment in the definition of $\F(X)$ is needed here. If $X$ is closed then one computes easily that $\Delta(\F(X)\cup \{X\}) = 0$. 

\medskip

\begin{proof}
Write $\F(X)$ as $(F_i : i \in I)$ as in the definition of flatness. Note that $X = \bigcup \F(X)$, so $F_{\emptyset} = X$. Then
\[\alpha(X) = \vert F_\emptyset \vert - d(F_\emptyset) - \sum_{i \in I} \alpha(F_i)\]
and
for $\emptyset \neq S \subseteq I$
\[\alpha(F_S) = \vert F_S\vert - d(F_S) - \sum_{H \sqsubset F_S} \alpha(H).\]
Note that by inclusion-exclusion
\[\sum_{S \subseteq I} (-1)^{\vert S\vert} \vert F_S\vert = 0.\]
Thus
\[\alpha(X)+ \sum_{S\neq \emptyset} (-1)^{\vert S\vert} \alpha(F_S) = \]\[0 - \sum_S (-1)^{\vert S\vert}d(F_S) - \sum_{i \in I} \alpha(F_i) - \sum_{S\neq \emptyset}(\sum_{H \sqsubset F_S} (-1)^{\vert S \vert} \alpha(H)).\]
So 
\[\alpha(X) + \Delta(\F(X)) = - ( \sum_{i \in I }\alpha(F_i) + \sum_{S\neq \emptyset}(\sum_{H \sqsubseteq F_S} (-1)^{\vert S \vert} \alpha(H))).\]
Let $H \in \F$: so $H = F_i$ for some $i \in I$. The `contribution' of this to the second sum on the right-hand side is

\[\alpha(H) \sum_{S: S\neq \emptyset,  \, H \sqsubseteq F_S} (-1)^{\vert S\vert}.\]
Let $I_H = \{j \in I : H \sqsubseteq F_j\}$. So the above summation has $S$ ranging over the non-empty subsets of $I_H$. As $I_H \neq \emptyset$ we therefore have $\sum_{\emptyset \neq S \subseteq I_S} (-1)^{\vert S \vert} = -1$. So $\alpha(H) \sum_{S\neq \emptyset,  \, H \sqsubseteq F_S} (-1)^{\vert S\vert}  = \alpha(H) = \alpha(F_i)$. Thus $\alpha(X) + \Delta(\F(X)) = 0$, as required.
\end{proof}

\begin{cor}\label{43}
Suppose $(A, \cl)$ is a finite pregeometry. Then the following are equivalent (where the notation $\F(X)$ is as in the previous lemma).
\begin{enumerate}
\item There is a set $R$ of non-empty subsets of $A$ such that $(A; R) \in \C$ and $PG(A; R) = (A, \cl)$.
\item For all subsets $X$ of $A$ which are unions of closed sets, we have $\alpha(X) \geq 0$.
\item For all subsets $X$ of $A$ which are unions of closed sets, we have $\Delta(\F(X)) \leq 0$.
\item $(A, \cl)$ is flat.
\end{enumerate}
Moreover, we can take $R$ in (1) to consist of sets of size at most $k$ iff $\Delta(\F(X)) = 0$ whenever $X$ is a closed set of dimension at least $k$.
\end{cor}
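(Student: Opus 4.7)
The plan is to close the cycle $(4)\Rightarrow(3)\Leftrightarrow(2)\Leftrightarrow(1)\Rightarrow(4)$ using Theorem \ref{32}, Lemma \ref{42}, and the already-quoted Lemma 15 of \cite{Hr}. The equivalence $(1)\Leftrightarrow(2)$ is Theorem \ref{32} verbatim. For $(2)\Leftrightarrow(3)$, Lemma \ref{42} supplies the sign-switching identity $\alpha(X)=-\Delta(\F(X))$ for every union of closed sets $X$ with $d(X)\geq 2$, which makes the two inequalities interchangeable in that regime. The remaining cases $d(X)\leq 1$ force $X$ to be either $\cl(\emptyset)$ or a single point, and in both of these $\alpha(X)\geq 0$ is immediate from the explicit values of $\alpha$ on $\cl(\emptyset)$ and on points recorded just after the definition of the $\alpha$-function; they therefore add no content to either condition.

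The implication $(4)\Rightarrow(3)$ is essentially definitional: for any union of closed sets $X$ with $d(X)\geq 1$, the family $\F(X)$ is a non-empty finite collection of distinct finite-dimensional closed sets, so the flatness hypothesis applied to $\F(X)$ yields $\Delta(\F(X))\leq 0$ at once (the case $X=\cl(\emptyset)$ being vacuous). The implication $(1)\Rightarrow(4)$ is exactly Lemma 15 of \cite{Hr}, recalled in the paragraph above Lemma \ref{42}. This closes the cycle.

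For the \emph{moreover} clause I would combine the moreover part of Theorem \ref{32} — a presentation by sets of size $\leq k$ exists iff $\alpha(F)=0$ for every closed $F$ with $d(F)\geq k$ — with the identity $\alpha(F)=-\Delta(\F(F))$ of Lemma \ref{42}. Since $\alpha(F)\geq 0$ by (2), the vanishing of $\alpha(F)$ is equivalent to the vanishing of $\Delta(\F(F))$, giving the stated characterisation directly for $d(F)\geq 2$. The only step requiring any real care is the low-dimensional bookkeeping in $(2)\Leftrightarrow(3)$ and in the moreover clause when $k$ is small; beyond that the argument is a straight assembly of the preceding machinery, and no substantial obstacle is expected.
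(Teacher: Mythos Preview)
Your proposal is correct and follows essentially the same route as the paper: the paper derives the equivalence of (1), (2), (3) from Theorem~\ref{32} and Lemma~\ref{42}, gets $(4)\Rightarrow(3)$ ``clearly'' and $(1)\Rightarrow(4)$ from Hrushovski's Lemma~15, and reads the moreover clause off Theorem~\ref{32}. Your version is simply more explicit than the paper's one-paragraph proof about the low-dimensional bookkeeping (the cases $d(X)\leq 1$ where Lemma~\ref{42} does not apply), which the paper leaves implicit.
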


\begin{proof}
Equivalence of (1), (2) and (3) follows from Theorem \ref{32} and Lemma \ref{42}. Clearly (4) implies (3), and by Hrushovski's result, (1) implies (4).

The `moreover' part is just from Theorem \ref{32}.
\end{proof}

\begin{rem}\rm \label{44}
If $(A; R) \in \C$ and $F_1,\ldots,F_r$ are closed sets in the associated pregeometry, it follows from the definition that 
\[- \Delta(F_1,\ldots,F_r) = \delta(\bigcup_{i=1}^r F_i) - d(\bigcup_{i=1}^r F_i) + \rho(F_1,\ldots, F_r)\]
where $\rho(F_1,\ldots, F_r)$ is the number of relations on $\bigcup_{i=1}^r F_i$ (that is, subsets of this which are in $R$) which are not contained in one of the $F_i$. This proves flatness of $PG(A; R)$. Note also that $\Delta(F_1,\ldots,F_r) = 0$ iff $\bigcup_{i=1}^r F_i \leq A$ and  any relation on $\bigcup_{i=1}^r F_i$ is contained in one of the $F_i$.
\end{rem}

\section{Self-sufficient embedding}

If $\A = (A, \cl)$ is a pregeometry and $C \subseteq A$, then we can consider the restriction to $C$: the pregeometry $(C, \cl_C)$ on $C$ with closure $\cl_C(X) = \cl(X)\cap C$, for $X \subseteq C$ (in matroid terms, this is the deletion of $A\setminus C$). If $A$ is finite, we can also consider the $\alpha$-function $\alpha_C$ on $(C, \cl_C)$, and in general there is no reason to expect any connection between $\alpha_A$ and $\alpha_C$.

The following result gives a characterization in terms of $\alpha_C$ and $\alpha_A$  of the subsets $C \subseteq A$ for which there is a presentation $(A; R)$ of $\A$ such that  $C \leq (A;R)$. This can be seen as a generalization of Theorem \ref{32} (which is the case $C = \emptyset$).

\begin{thm}\label{51}
Suppose $\A = (A, \cl) \in PG(\C)$ and $C \subseteq A$. The following are equivalent:
\begin{enumerate}
\item There is a presentation $(A; R)$ of $\A$ such that $C \leq A$.
\item There is a presentation $(A; R)$ of $\A$ such that $(C; R\vert C)$ is a presentation of $(C, \cl_C)$.
\item  There is an $\alpha_A$-transversal $(X_F: F \sqsubseteq \A)$ of the closed sets of $\A$ such that if $H$ is a closed set of $(C, \cl_C)$, then $X_{\cl(H)}\cap C$ has size $\alpha_C(H)$.
\item $\alpha_C(Y) \geq 0$ whenever $Y$ is a union of closed sets of $(C,\cl_C)$, and if $X$ is a union of closed sets in $A$, then
\[\alpha_A(X) \geq \alpha_C(X\cap C) + \sum \{\alpha_C(J) : J\sqsubset_C X\cap C \mbox{ and } \cl_A(J)\not\subset X\}.\]
\end{enumerate}
\end{thm}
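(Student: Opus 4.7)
The plan is to prove the cycle $(1) \Leftrightarrow (2) \Leftrightarrow (3) \Leftrightarrow (4)$, using Theorem~\ref{32} throughout. The equivalence $(1)\Leftrightarrow(2)$ is essentially the final paragraph of Section~1 of the paper: if $C\leq A$ then $PG(A;R)$ restricted to $C$ coincides with $PG(C;R|C)$; conversely, if $(C;R|C)$ is a presentation of $(C,\cl_C)$, then for every finite $C'\subseteq C$ one has $R[C']=(R|C)[C']$, hence $\delta_A(C')=\delta_C(C')=d_C(C')=d_A(C')$, which forces $C'\leq A$.

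For $(2)\Rightarrow(3)$, I would take a presentation $(A;R)$ with $C\leq A$ and apply Lemma~\ref{orientation} to pick a transversal $t:R\to A$ satisfying $t(r)\in C \iff r\subseteq C$. As in the step $(2)\Rightarrow(1)$ of Theorem~\ref{32}, setting $X_F:=\{x\in F\setminus Y:\cl_A(t^{-1}(x))=F\}$ (where $Y=A\setminus t(R)$) gives an $\alpha_A$-transversal. For a closed set $H$ of $(C,\cl_C)$, using that $C\leq A$ forces $\cl_C(Z)=\cl_A(Z)\cap C$ for $Z\subseteq C$, together with the compatibility of $t$, we get $X_{\cl_A(H)}\cap C=\{x\in H\setminus Y:\cl_C(t^{-1}(x))=H\}$; by applying the same step of Theorem~\ref{32} to the presentation $(C;R|C)$ of $(C,\cl_C)$, this set has size $\alpha_C(H)$.

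For $(3)\Rightarrow(2)$, I would set $Y_H:=X_{\cl_A(H)}\cap C$ (these form an $\alpha_C$-transversal of $(C,\cl_C)$, since $\cl_A(H)\cap C=H$ gives pairwise disjointness) and adapt the construction in $(3)\Rightarrow(2)$ of Theorem~\ref{32}: for $a\in X_F$, set $R_a:=\{a\}\cup S(a)$ where $S(a):=H\setminus\bigcup_{J\sqsubseteq_C H}Y_J$ if $a\in Y_H$ (so $F=\cl_A(H)$), and $S(a):=F\setminus\bigcup_{G\sqsubseteq_A F}X_G$ otherwise. Because $C\leq A$ preserves dimensions, $S(a)$ is a basis for $F$ in $\A$ in both cases, so $\cl_A(R_a)=F$. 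The adaptation of Claims~1--4 of Theorem~\ref{32} then goes through: Claim~2 uses only $\cl_A(R_a)=F$; Claim~3 extends to the new kind of basis via the observation that $S(a)\subseteq Z\leq G$ forces $d_A(Z)=d_A(G)$ and so $\delta(Z)=\delta(G)$ by Claim~2; hence the WLOG step in Claim~4 forces $F'\subseteq X$ whenever any admissible basis of $F'$ lies in $X$, giving $|R[X]|=\sum_{F'\sqsubseteq X}\alpha_A(F')=|X|-d(X)$. Finally $R_a\subseteq C$ iff $a\in\bigcup_H Y_H$, so $|R[C]|=\sum_H\alpha_C(H)=|C|-d_A(C)$, giving $\delta_A(C)=d_A(C)$ and hence $C\leq A$.

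The remaining equivalence $(3)\Leftrightarrow(4)$ is a Hall-marriage argument with fixed-size constraints. The existence of an $\alpha_A$-transversal satisfying~(3) corresponds to a bipartite matching in which, at each image closed set $F=\cl_A(H)$, exactly $\alpha_C(H)$ of the $\alpha_A(F)$ slots are pinned to $H\subseteq C$. By the Halmos--Vaughan generalization of Hall's theorem (as used in $(1)\Rightarrow(3)$ of Theorem~\ref{32}), feasibility reduces to a single inequality parametrized by an arbitrary family of closed sets in $\A$ and an arbitrary family of closed sets in $(C,\cl_C)$; maximizing the demand on each side and unfolding the definition of $\alpha$ should deliver the bound in~(4), with the $\alpha_C\geq 0$ part corresponding to sub-configurations contained entirely in $C$. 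I expect this to be the hardest part: keeping careful track of which closed sets $J$ of $C$ contribute to the Hall bound (namely those whose $A$-closure escapes the chosen union $X$) and aligning the resulting inequality with the specific form given in~(4) requires combinatorial bookkeeping.
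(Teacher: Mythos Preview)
Your cycle $(1)\Leftrightarrow(2)$, $(2)\Leftrightarrow(3)$, $(3)\Leftrightarrow(4)$ differs from the paper's $(1)\Leftrightarrow(2)$, $(1)\Rightarrow(4)\Rightarrow(3)\Rightarrow(2)$. The direct argument you give for $(2)\Rightarrow(3)$ is correct and pleasant; the paper does not isolate this step, going instead through~(4). In your $(3)\Rightarrow(2)$ there are two fixable slips. First, your Claim~3 justification is circular: from $S(a)\subseteq Z\leq G$ you get $d_A(Z)=d_A(G)$ and (by Claim~2) $\delta(G)=d_A(G)$, but concluding $\delta(Z)=\delta(G)$ would still need $\delta(Z)\geq d_A(Z)$, which is Claim~4. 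In fact your two cases for $S(a)$ coincide: one checks $S^C(H)=S(\cl_A(H))$ (this is precisely the Claim in the paper's own $(3)\Rightarrow(2)$), so your ``modified'' construction is the standard one and the original Claims~1--4 apply verbatim. Second, in $(2)\Rightarrow(1)$ the assertion $\delta_C(C')=d_C(C')$ fails for general $C'\subseteq C$; taking $C'=C$ (everything is finite) suffices and is what the paper does.

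The substantive gap is the direction towards~(4). The Halmos--Vaughan theorem makes~(3) equivalent to a family of defect inequalities, and the paper's $(4)\Rightarrow(3)$ shows that~(4) implies all of them. But the converse you need---that these defect inequalities yield the specific bound in~(4)---is not a matter of ``maximising the demand'': if you take the maximal family $\{G:G\sqsubset X\}$ and unfold, the resulting lower bound on $\alpha_A(X)$ carries $d_A(X)$ where~(4) carries $d_A(X\cap C)$, and is strictly weaker whenever $d_A(X)>d_A(X\cap C)$. The paper instead proves $(1)\Rightarrow(4)$ by a direct computation with a compatible transversal; the inequality emerges with an extra term $\bigl(\delta(X)-d(X)\bigr)-\bigl(\delta(X\cap C)-d(X\cap C)\bigr)$, and showing this is nonnegative is exactly Lemma~\ref{50} (for $C\leq A$ one has $\delta(X)-d(X)\geq\delta(X\cap C)-d(X\cap C)$). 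This lemma is the one genuinely new ingredient beyond Theorem~\ref{32}, and your sketch does not supply it or a substitute.
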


\noindent\textit{Remark:\/} In (4), $J \sqsubset_C X\cap C$ means that $J \subset X\cap C$ and $\cl_C(J) = J$. Note that in the case where $X$ is a closed subset of $A$, we have $\cl_A(J) \subset X$, so the condition reduces to $\alpha_A(X) \geq \alpha_C(X\cap C)$.

Before we begin the proof, we note the following simple result:

\begin{lem}\label{50} Let $(A; R) \in \C$ and $C \leq A$. Then for any $X \subseteq A$ we have 
$\delta(X) - d(X) \geq \delta(X\cap C) - d(X\cap C)$.
\end{lem}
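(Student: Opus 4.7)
The plan is to combine submodularity of $\delta$ with a judicious choice of ``self-sufficient hull'' for $X \cap C$ that lies entirely inside $C$. The properties of $\leq$ recalled in the introduction (transitivity, intersections, and the fact that $d(X\cap C)$ is realised by a $\leq A$-superset) do essentially all the work.

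First I would introduce the set $P$ defined as the smallest subset of $A$ with $X \cap C \subseteq P$ and $P \leq A$; this exists by properties (2) and (3) of self-sufficiency, and satisfies $\delta(P) = d(X\cap C)$. The key preparatory claim is that $P \subseteq C$. This uses $C \leq A$: by property~(1), $P \cap C \leq C$, and by property~(2), $P \cap C \leq A$. Since $X \cap C \subseteq P\cap C \subseteq P$, the minimality of $P$ forces $P = P\cap C \subseteq C$. (Equivalently, since $C \leq A$, the $\leq$-hull of $X\cap C$ can be computed inside $C$.) A useful side consequence is $X \cap P = X \cap C$: the inclusion $X\cap C \subseteq X\cap P$ follows from $X\cap C \subseteq P$, while $X\cap P \subseteq X \cap C$ comes from $P \subseteq C$.

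The second step is a single application of submodularity of $\delta$:
\[
\delta(X) + \delta(P) \;\geq\; \delta(X\cup P) + \delta(X\cap P) \;=\; \delta(X\cup P) + \delta(X\cap C).
\]
Substituting $\delta(P) = d(X\cap C)$ and rearranging gives
\[
\delta(X) - \delta(X\cap C) \;\geq\; \delta(X\cup P) - d(X\cap C).
\]
Finally, $X \subseteq X\cup P \subseteq A$, so $\delta(X\cup P) \geq d(X)$ by the definition of $d$. Combining these inequalities yields $\delta(X) - \delta(X\cap C) \geq d(X) - d(X\cap C)$, which is the desired conclusion.

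The potential obstacle is purely conceptual: recognising that the hypothesis $C \leq A$ is exactly what allows the minimal $\leq$-hull $P$ of $X\cap C$ to be chosen inside $C$, so that $X \cap P$ collapses to $X\cap C$ in the submodular estimate. Once this is noticed, the remainder of the argument is a one-line manipulation and does not require any appeal to the matroid-theoretic results of the previous sections.
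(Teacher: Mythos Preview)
Your proof is correct and follows essentially the same strategy as the paper: apply submodularity of $\delta$ to $X$ and an auxiliary set whose predimension equals $d(X\cap C)$, then use $C\leq A$ to control the intersection term. The paper takes the auxiliary set to be $Z=\cl(X\cap C)$ and uses $C\leq A$ via property~(1) to get $X\cap C\leq X$, hence $\delta(X\cap C)\leq\delta(X\cap Z)$; you instead take the self-sufficient hull $P$ and use $C\leq A$ (via properties (1) and (2)) to force $P\subseteq C$, so that $X\cap P=X\cap C$ exactly. Your version is marginally cleaner, since the intersection term is handled by an equality rather than an inequality, but the underlying idea is the same.
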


\begin{proof} Let $Z = \cl(X\cap C)$. By point (1) in Section 1 we have $X\cap C \leq X$. So $\delta(X\cap C) \leq \delta(Z\cap C)$. As $X, Z \subseteq \cl(X)$ we have $d(X) \leq \delta(X\cup Z)$ and, using other points from Section 1, the following calculation:
\begin{multline*}
d(X) \leq \delta(X\cup Z) \leq \delta(X)+\delta(Z)- \delta(X\cap Z) = \\
\delta(X) + d(X\cap C) - \delta(X\cap Z) \leq 
\delta(X) - d(X\cap C) - \delta(X\cap C).
\end{multline*}
Rearranging gives what we require.
\end{proof}

\begin{proof}[Proof of 5.1] The proof follows that of Theorem \ref{32}, so we only sketch some of the arguments.

That (1) implies (2) is given in Section 1. For the converse, we show that $C \leq A$ in the presentation from (2). We know that $d(C)$ is the same whether computed in $(C; R\vert C)$ or $\A$. Thus if $C \subseteq B \subseteq A$ we have 
$\delta(C) = d(C) \leq d(B) \leq \delta(B)$,
as required.

\medskip

To show that (1) implies (4), we take a transversal $t$ of $R$ with the property that $t(r) \in C \Leftrightarrow r \subseteq C$ (using Lemma \ref{orientation}). Let $Y$ be the complement of the image of $t$ in $A$. So $t \vert (R\vert C)$ is a transversal of $R\vert C$ with image $C\setminus Y$. 

Let $X$ be a union of closed sets in $A$. As in $(2) \Rightarrow (1)$ of Theorem \ref{32}, we have:
\[\alpha(X) = \vert X \vert - d(X) - \vert\{x\in X\setminus Y : \cl(t^{-1}(x)) \subset X\}\vert,\]
and similarly
\[\alpha_C(X\cap C) = \vert X \cap C\vert - d(X\cap C) -\vert \{x\in (X\cap C)\setminus Y: \cl_C(t^{-1}(x)) \subset X\cap C\}\vert.\]
If $x \in X\setminus Y$  and $\cl(t^{-1}(x)) \subset X$, then either $x \not\in C$, so $t^{-1}(x) \in R[X]\setminus R[C]$; or $x \in C$, so $x \in X \cap C$ and $\cl_C(t^{-1}(x)) \subseteq X\cap C$. Thus:
\begin{multline*}\vert\{x \in X \setminus Y: \cl(t^{-1}(x)) \subset X\}\vert \leq \\
\vert R[X]\setminus R[C] \vert +  \vert \{x \in X\setminus Y: \cl(t^{-1}(x)) \subset X, \,\, \cl_C(t^{-1}(x)) \subseteq X \cap C\}\vert.\end{multline*}
The second term in the sum here is:
\begin{multline*}
\vert\{x \in X\cap C\setminus Y : \cl_C(t^{-1}(x)) \subseteq X\cap C, \,\, \cl(t^{-1}(x)) \subset X\}\vert 
+\\ \vert\{x \in X\cap C\setminus Y : \cl_C(t^{-1}(x)) \subseteq X\cap C, \,\, \cl(t^{-1}(x)) \not\subset X\}\vert .
\end{multline*}
The second summand is equal to 
\[\sum \{\alpha_C(J) : J\sqsubset_C X\cap C \mbox{ and } \cl_A(J)\not\subset X\}.\]
If $X\cap C$ is not closed in $(C,\cl_C)$, the first summand is equal to:
\[\vert X \cap C\vert - d(X\cap C) - \alpha_C(X\cap C),\]
as above.

Putting these together, we obtain
\begin{multline*} \alpha(X) \geq \alpha_C(X\cap C) + \delta(X) - \delta(X\cap C) - d(X) + d(X\cap C) +\\
\sum \{\alpha_C(J) : J\sqsubset_C X\cap C \mbox{ and } \cl_A(J)\not\subset X\}.
\end{multline*}

It can be checked that the same inequality also holds if $X\cap C$ is closed in $(C,\cl_C)$. Then (4) follows from  Lemma \ref{50}.

Now suppose that (4) holds, and deduce (3). By the first assumption in (4) and Theorem \ref{32} we can assume that we have an $\alpha_C$-transversal of the closed sets in $(C,\cl_C)$. We show that this can be extended to an $\alpha$-transversal of the closed sets of $A$ using only elements of $A\setminus C$. So suppose $F_1,\ldots, F_r$ are clsoed subsets of $A$ and $\cl(F_i\cap C) = F_i$ iff $i \leq s$. As in the proof of $(1) \Rightarrow (2)$ of Theorem \ref{32}, it will be enough to show that if $X = \bigcup_{i \leq r} F_i$ then 
\[\vert X \setminus C\vert \geq \sum_{i\leq r} \alpha(F_i) - \sum_{i\leq s} \alpha_C(F_i \cap C).\]
By definition of $\alpha$ and $\alpha_C$
\begin{multline*} 
\vert X\setminus C\vert = \alpha(X) - \alpha_C(X\cap C) + d(X) - d(X\cap C) + \\
\sum_{G \sqsubset X} \alpha(G) - \sum_{H \sqsubset_C X\cap C} \alpha_C(H).
\end{multline*}
By (4) this is 
\begin{multline*}
\geq  \sum \{\alpha_C(J) : J\sqsubset_C X\cap C \mbox{ and } \cl_A(J)\not\subset X\} + \\
\sum_{G \sqsubset X} \alpha(G) - \sum_{H\sqsubset_CX\cap C}\alpha_C(H)\end{multline*}
\begin{multline*}
= \sum_{G \sqsubset X} \alpha(G) - \sum_{H\sqsubset_C X\cap C, \cl(H) \subset X} \alpha_C(H) 
\geq \sum_{i\leq r} \alpha(F_i) - \sum_{i\leq s}\alpha_C(F_i\cap C),
\end{multline*}
because $\alpha(\cl(H)) - \alpha_C(H) \geq 0$ if $H \sqsubseteq_C C$, using (4) as in the remark afer the statement of the theorem.

\medskip

Finally, we suppose that  (3) holds and deduce (2). Using the given $\alpha_A$-transversal, perform the construction of $R$ given in the step $(3) \Rightarrow (2)$ of the  proof of Theorem \ref{32}. We want to show that $(C, R\vert C)$ is a presentation for $(C,\cl_C)$. 

Recall some of the details of the construction of $R$. If $F$ is a closed set in $\A$ let $S(F) = F \setminus\bigcup_{G \sqsubseteq F}X_G$ and for $a \in X_F$ let $R_a = \{a\}\cup S(F)$. Then $R$ is the set of these. Similarly if $H$ is a closed set in $(C, \cl_C)$, let $X^C_H = X_{\cl(H)} \cap C$ and $S^C(H) = H \setminus \bigcup_{G\sqsubseteq H} X^C_G$. For $a \in X^C_G$ let $R^C_a = \{a\}\cup S^C(H)$ and let $R^C$ be the set of all these as $H$ ranges over the closed sets of $(C,\cl_C)$. By the proof of Theorem \ref{32}, $(C; R^C)$ is a presentation of $(C, \cl_C)$. So it will suffice to prove that $R\vert C = R^C$.

\smallskip

\noindent\textit{Claim:\/} If $H$ is a closed set in $(C,\cl_C)$, then $S^C(H) = S(\cl(H))$.\newline
Note that 
\[S^C(H) = (\cl(H) \setminus \bigcup_{G \sqsubseteq H} X_{\cl(G)})\cap C \subseteq S(\cl(H)) \cap C.\]
But (from Claim 1 in the proof of Theorem \ref{32}) we have 
\[\vert S^C(H)\vert = d(H) = d(\cl(H)) = \vert S(\cl(H))\vert,\]
hence the claim holds.

\smallskip

Using the definitions of $R^C$ and $R$, this shows that  $R^C \subseteq R \vert C$. To get the equality, note that as 
 $(C; R^C)$ is a presentation of $(C, \cl_C)$ we have $d(C) = \vert C \vert - \vert R^C\vert$. On the other hand, $(A; R)$ is a presentation of $A$ so $d(C) \leq \delta(C) = \vert C \vert - \vert (R \vert C)\vert$. Thus $\vert (R\vert C)\vert \leq \vert R^C\vert$ and the equality follows.
\end{proof}

This enables us to give a different proof of the finite case of Theorem 4.3 of \cite{MFDE1}.

\begin{cor}
Suppose $m < n$ and $(A ; R) \in \C$ is such that there exists $C \leq A$ with $\vert C \vert = n$, $d(C) = n-1$ and every $(n-1)$-subset of $C$ is independent. If $(B; R') \in \C_m$, then $PG(A; R)$ is not isomorphic to $PG(B; R')$. 
\end{cor}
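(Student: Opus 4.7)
The plan is to use the moreover part of Theorem \ref{32} in contrapositive: to show $PG(A; R) \not\cong PG(B; R')$ for any $(B; R') \in \C_m$, it suffices to exhibit a flat $F \sqsubseteq A$ with $d(F) \geq m$ and $\alpha(F) > 0$. Indeed, any such isomorphism would transport $R'$ to a presentation of the pregeometry $PG(A; R)$ by subsets of size at most $m$, which by the moreover part of Theorem \ref{32} would force $\alpha(F) = 0$ on every flat of dimension $\geq m$. The natural candidate is $F = \cl(C)$, whose dimension is $d(C) = n-1 \geq m$.

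The crucial structural observation is that $R\vert C = \{C\}$. Since $C \leq A$, we have $\delta(C) = d(C) = n-1$, so $\vert R\vert C\vert = \vert C\vert - \delta(C) = 1$; let $r_0$ denote the unique element. If $r_0$ were a proper subset of $C$, extend it to some $(n-1)$-subset $Y$ of $C$; then $r_0 \in R[Y]$ gives $\delta(Y) \leq n-2$, contradicting the fact that $Y$ is independent (so $d(Y) = n-1 \leq \delta(Y)$). With this in hand, Lemma \ref{orientation} provides a transversal $t : R \to A$ with $t(r) \in C \Leftrightarrow r \subseteq C$; set $c_0 = t(C) \in C$ and $Y_0 = A \setminus t(R)$, so $c_0 \in F \setminus Y_0$ and $t^{-1}(c_0) = C$. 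Applying the identity $\alpha(F) = \vert\{x \in F \setminus Y_0 : \cl(t^{-1}(x)) = F\}\vert$ established in the proof of $(2) \Rightarrow (1)$ of Theorem \ref{32}, the element $c_0$ is counted because $\cl(t^{-1}(c_0)) = \cl(C) = F$, and hence $\alpha(F) \geq 1$.

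The one non-routine step is the identification $r_0 = C$: it is here that the hypothesis on $(n-1)$-subsets of $C$ feeds into the matroid-theoretic invariant $\alpha$. Everything else is a direct extraction of a witness for $\alpha(F) > 0$ from the transversal produced by self-sufficiency of $C$, followed by the moreover part of Theorem \ref{32}.
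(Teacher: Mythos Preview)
Your argument is correct, but it follows a genuinely different path from the paper's. The paper exploits Theorem~\ref{51}: since every proper subset of $C$ is independent, $\alpha_C(G) = 0$ for every proper $\cl_C$-closed $G \subset C$, so $\alpha_C(C) = |C| - d(C) = 1$; then the Remark after Theorem~\ref{51} (condition (4) for closed $X$) gives $\alpha_A(\cl(C)) \geq \alpha_C(C) = 1$, and one concludes via the `moreover' clause of Theorem~\ref{32} exactly as you do.

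Your route bypasses Theorem~\ref{51} entirely. You read off the actual relation structure on $C$ in the given presentation (the single relation must be $C$ itself), then use Lemma~\ref{orientation} and the $\beta$-description of $\alpha$ from the proof of Theorem~\ref{32} to exhibit an explicit witness $c_0$ for $\alpha_A(\cl(C)) \geq 1$. This is more elementary---it needs only Section~2 and Theorem~\ref{32}---and gives a concrete picture of where the nonzero $\alpha$-value comes from. The paper's argument, by contrast, is presentation-free on $C$ (it only uses the pregeometry $(C,\cl_C)$) and is placed here precisely to illustrate the strength of Theorem~\ref{51}: the inequality $\alpha_A(F) \geq \alpha_C(F\cap C)$ does the transfer from $C$ to $A$ without ever looking at $R$. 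Both arguments reach the same invariant $\alpha_A(\cl(C)) \geq 1$ and finish identically.
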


\begin{proof}
It is easy to see that $\alpha$ of any independent set is 0. Thus $\alpha_C(C) = \vert C \vert - d(C) = 1$. So, by the Remarks after \ref{51}, $\alpha_A(\cl(C)) \geq 1$. But if $F$ is a closed set in $(B; R') \in \C_m$ of dimension at least $m$, then $\alpha_B(F) = 0$, by Theorem \ref{32}.
\end{proof}

Recall that if $k \in \{3,4,\ldots, \infty\}$ then $\C_k$ is the class of finite set systems $(A; R)$ with $\emptyset \leq A$ and the sets in $R$ of size at most $k$. We denote by $\P_k = PG(\C_k)$ the class of pregeometries $PG(A; R)$ with $(A; R) \in \C_k$ and for $B \subseteq \A \in PG(\C_k)$ we write $B \unlhd_k A$ to mean that there is a presentation $(A; R) \in \C_k$ of $\A$ with $B \leq (A; R)$. It can be shown that $\unlhd_k$ is transitive (\cite{MFDE1}, 6.3). We usually write $(\P, \unlhd)$ instead of $(\P_{\infty}, \unlhd_\infty)$. 

Thus, in this notation, Theorem \ref{51} gives a characterizaton of $\P_k$ and the relation $\unlhd_k$ in terms of the $\alpha$-function. However, the following shows that the relation $\unlhd_k$ is redundant.

\begin{cor}
Suppose $\A = (A, \cl) \in PG(\C_k)$ and $C \unlhd_\infty \A$. Then $C \unlhd_k \A$.
\end{cor}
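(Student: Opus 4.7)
The plan is to unpack the hypothesis $C \unlhd_\infty \A$ via Theorem \ref{51}(3) to obtain a single $\alpha_A$-transversal with the right compatibility with $C$, and then feed this transversal into the construction from $(3)\Rightarrow(2)$ of Theorem \ref{32}, observing that the construction automatically produces relations of size at most $k$.

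First, invoking $(1)\Rightarrow(3)$ of Theorem \ref{51} on the hypothesis $C \unlhd_\infty \A$, I get an $\alpha_A$-transversal $(X_F : F\sqsubseteq \A)$ of the closed sets of $\A$ with the property that $\vert X_{\cl(H)}\cap C\vert = \alpha_C(H)$ for every closed set $H$ of $(C,\cl_C)$.

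Next, the hypothesis $\A \in PG(\C_k)$ combined with the ``moreover'' part of Theorem \ref{32} tells me that $\alpha_A(F) = 0$ for every closed set $F$ of $\A$ with $d(F) \geq k$. In particular, $X_F = \emptyset$ for all such $F$, since $\vert X_F\vert = \alpha_A(F)$.

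Finally, I run the construction from $(3)\Rightarrow(2)$ of Theorem \ref{32} on this transversal: it produces a presentation $(A; R)$ of $\A$ whose relations are the sets $R_a = \{a\}\cup S(F)$ for $a \in X_F$, each of size $d(F)+1$. Since $X_F = \emptyset$ once $d(F) \geq k$, every relation in $R$ has size at most $k$, so $(A; R) \in \C_k$. At the same time, this is the very construction used in the $(3)\Rightarrow(2)$ step of Theorem \ref{51}, and the compatibility condition on $(X_F)$ is precisely what makes that argument go through; so $(C; R\vert C)$ is a presentation of $(C,\cl_C)$, and hence (by $(2)\Rightarrow(1)$ of Theorem \ref{51}) $C \leq (A; R)$. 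Therefore $C \unlhd_k \A$.

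There is no substantial obstacle to overcome; the only point requiring a moment of care is that a \emph{single} $\alpha_A$-transversal can simultaneously witness the size bound (from $\A \in PG(\C_k)$, via the vanishing of $\alpha_A$ in large dimension) and the self-sufficiency of $C$ (from the compatibility condition provided by Theorem \ref{51}). Once this is observed, the corollary is immediate from Theorems \ref{32} and \ref{51}.
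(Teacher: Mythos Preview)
Your proof is correct and follows essentially the same route as the paper's own argument: apply $(1)\Rightarrow(3)$ of Theorem~\ref{51} to obtain a compatible $\alpha_A$-transversal, use the vanishing of $\alpha_A$ in dimension $\geq k$ (from Theorem~\ref{32}) to ensure the relations $R_a$ have size at most $k$, and then invoke the $(3)\Rightarrow(2)$ step of Theorem~\ref{51} to conclude $C \leq (A;R)$ with $(A;R)\in\C_k$. The only difference is that you spell out the size computation $\vert R_a\vert = d(F)+1$ explicitly, which the paper leaves implicit.
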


\begin{proof} By assumption, there is a presentation $(A; R') \in \C$ of $\A$ with $C \leq (A; R')$. We want to show that there is such a presentation in $\C_k$. By (1 $\Rightarrow$ 3) of Theorem  \ref{51}, condition (3) of Theorem \ref{51} holds. As $\A \in PG(\C_k)$, we have $\alpha_A(F) = 0$ whenever $F$ is a closed subset of $\A$ of dimension at least $k$ (as in Theorem \ref{32}). Then the presentation $(A; R)$ of $\A$ built using the $\alpha_A$-transversal in step (3 $\Rightarrow$ 2) of \ref{51} is in $\C_k$ and has $C \leq (A; R)$.
\end{proof}

Note that although the $\alpha$-function does not make sense for infinite pregeometries, the definition of flatness (Definition \ref{41}) is still meaningful. Moreover, if $(A; R) \in \Cb$ then every finite subset of $A$ is contained a finite self-sufficient subset of $A$, so the pregeometry $PG(A; R)$ is a direct limit of finite $(C_i : i \in I)$ (for some directed set $I$) such that $C_i \unlhd C_j$ when $i < j$. In particular, if $A$ is countable, there are finite subsets $C_0 \unlhd C_1 \unlhd C_2 \unlhd \ldots $ of $A$ with $A = \bigcup_{i} C_i$. This raises the following question:

\begin{question}\label{59}\rm If $\A = (A, \cl)$ is a flat pregeometry, can $\A$ be written as the union of a directed system $(C_i : i \in I)$ of finite subpregeometries where $C_i \unlhd C_j$ when $i < j$?
\end{question} 

It is not hard to show that if $\A$ can be written in this way, then (using Theorem \ref{32}) it has a presentation: $\A = PG(A; R)$ for some $(A; R) \in  \Cb$.

\section{Free amalgamation and weak canonical bases}

Suppose $\A = (A, \cl)$ is a pregeometry with dimension function $d$. Let $B$ be a closed set in $\A$ and $\a$ a finite tuple of elements in $A$. The dimension of $\a$ over $B$, denoted by $d(\a/B)$, is the dimension of $\a$ in the localization of $\A$ over $B$ (ie. the contraction of $\A$ over $B$). If $B$ is finite dimensional, this is $d(\a B) - d(B)$ (where $\a B$ means the set consisting of the elements of $B$ and the elements of the tuple $\a$). 

We say that $\A$ has \textit{weak canonical bases over closed sets} if whenever $B$ is a closed set and $\a$ is a tuple of elements of $A$, then there is a closed $B_0 \subseteq B$ such that for every closed $B_1 \subseteq B$ we have $d(\a/B_1) = d(\a/B)$ iff $B_0 \subseteq B_1$. 

Such a property was considered for full algebraic matroids in \cite{DL}. We shall show:

\begin{thm}\label{61} If $\A$ is a strict gammoid, then $\A$ has weak canonical bases over closed sets. 
\end{thm}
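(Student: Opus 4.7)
The plan is to prove that the family
\[\S = \{B_1 \sqsubseteq B : d(\a/B_1) = d(\a/B)\}\]
is closed under pairwise intersection. Since $\A$ is a finite pregeometry, $\S$ is finite, and the intersection of all its members will then be a smallest element $B_0 \in \S$; this $B_0$ serves as the required weak canonical base. So I fix $B_1, B_2 \in \S$, set $k = d(\a/B)$, and aim to prove $d(\a/(B_1 \cap B_2)) \leq k$; the reverse inequality holds automatically by monotonicity of $d(\a/\cdot)$ in its argument.

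The first (purely submodular) step is the reduction $\cl(\a B_1) \cap B = B_1$ whenever $B_1 \in \S$. Indeed, submodularity yields
\[ d(\cl(\a B_1) \cup B) + d(\cl(\a B_1) \cap B) \leq d(\a B_1) + d(B),\]
and the left-hand union has closure $\cl(\a B)$, so its dimension is $k + d(B)$; combined with $d(\a B_1) = k + d(B_1)$ this forces $d(\cl(\a B_1) \cap B) \leq d(B_1)$. Since $B_1 \subseteq \cl(\a B_1) \cap B$ and both are closed, equality of dimensions gives $\cl(\a B_1) \cap B = B_1$. Applying this to $B_2$ as well yields $\cl(\a B_1) \cap \cl(\a B_2) \cap B = B_1 \cap B_2$.

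The main step is to apply the flatness inequality $\Delta(F_1, F_2, F_3) \leq 0$ -- available because strict gammoids are flat by Corollary~\ref{43} -- to the three closed sets $F_1 = \cl(\a B_1)$, $F_2 = \cl(\a B_2)$, $F_3 = B$. Using $B_i \subseteq B$ to compute $d(F_1 \cup F_2 \cup F_3) = k + d(B)$, together with the values $d(F_1 \cap F_3) = d(B_1)$, $d(F_2 \cap F_3) = d(B_2)$, $d(F_1 \cap F_2 \cap F_3) = d(B_1 \cap B_2)$ provided by the previous step, the alternating sum in $\Delta \leq 0$ collapses to
\[ d(\cl(\a B_1) \cap \cl(\a B_2)) \leq k + d(B_1 \cap B_2).\]
Since $\cl(\a \cup (B_1 \cap B_2)) \subseteq \cl(\a B_1) \cap \cl(\a B_2)$, this rearranges to $d(\a/(B_1 \cap B_2)) \leq k$, as required. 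The main obstacle -- and the reason the result genuinely uses flatness rather than merely submodularity -- is finding the right triple of flats: once $B$ itself is chosen as the third flat alongside the two $\cl(\a B_i)$, the submodular reduction of the previous paragraph forces the intersection-with-$B$ terms to collapse into $B_1$, $B_2$, and $B_1 \cap B_2$, and the whole inequality then telescopes to the bound needed.
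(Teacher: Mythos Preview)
Your proof is correct and takes a genuinely different route from the paper's. The paper works with a presentation $(A;R)$ of $\A$ and proves an intermediate ``CM-triviality'' lemma (Lemma~\ref{63}): if closed $A_1,A_2$ are independent over $A_1\cap A_2$ and $C$ is closed, then $A_1\cap C$ and $A_2\cap C$ are independent over their intersection. This is established via Lemma~\ref{62}, which characterises independence of closed sets as free amalgamation plus self-sufficiency in the presentation. The paper then applies CM-triviality with $A_1=\cl(\a B_1)$, $A_2=B$, $C=\cl(\a B_2)$ to conclude that $X=\cl(\a B_1)\cap\cl(\a B_2)$ and $B_2$ are independent over $B_1\cap B_2$, whence $d(\a/B_0)\leq d(X/B_0)=d(X/B_2)=d(\a/B_2)=k$. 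Your argument bypasses the presentation entirely: after the same reduction $\cl(\a B_i)\cap B=B_i$ (which is the paper's Lemma~\ref{64}), you apply the three-flat flatness inequality $\Delta(\cl(\a B_1),\cl(\a B_2),B)\leq 0$ directly, and the alternating sum telescopes to exactly the bound $d(\cl(\a B_1)\cap\cl(\a B_2))\leq k+d(B_1\cap B_2)$ you need. Your approach is more elementary and shows that the conclusion holds for any flat matroid without invoking the equivalence with strict gammoids; the paper's approach, on the other hand, isolates CM-triviality as a structural fact of independent model-theoretic interest.
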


For structures given by Hrushovski's construction, this sort of result is essentially folklore amongst model-theorists. But it seems worthwhile to include it here as it may be new to matroid theorists.

We shall use the fact from Section 2 that $\A$ has a presentation $\A = PG(A; R)$, and in the following, the predimension $\delta$ and the associated $\leq$ refer to this presentation.

We say that  $X, Y \subseteq A$ are \textit{freely amalgamated} over $X \cap Y$ if $R\vert (X\cup Y) = (R\vert X) \cup (R\vert Y)$. Note that this is equivalent to saying $\delta(X \cup Y) = \delta(X)+\delta(Y) - \delta(X\cap Y)$. We say that closed sets $X, Y$ in $A$ are \textit{independent} over $X \cap Y$ if $d(X \cup Y) = d(X)+d(Y) - d(X\cap Y)$. Note that this is equivalent to saying $d(X_1/X_2) = d(X_1/X_1\cap X_2)$.

\begin{lem}\label{62} Suppose $\A = (A; R) \in PG(\C)$ and $X, Y$ are closed in $\A$. Then  $X, Y$ are independent over $X \cap Y$ if and only if $X\cup Y \leq A$ and $X, Y$ are freely amalgamated over $X\cap Y$.
\end{lem}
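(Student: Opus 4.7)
The plan is to exploit the basic fact that every closed subset $F$ of $\A$ is self-sufficient in $A$, with $\delta(F) = d(F)$. This follows directly from the definitions in Section 1: if $Y_0$ is the (unique) smallest self-sufficient set containing $F$, then $\delta(Y_0) = d(F)$ and every $y \in Y_0$ satisfies $d(F \cup \{y\}) \leq \delta(Y_0) = d(F)$, hence $y \in \cl(F) = F$, so $Y_0 = F$. Applied to the three closed sets $X$, $Y$, and $X \cap Y$ (the intersection of closed sets is closed), this gives $\delta(X) = d(X)$, $\delta(Y) = d(Y)$, and $\delta(X \cap Y) = d(X \cap Y)$.

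For the $(\Leftarrow)$ direction, assume $X \cup Y \leq A$ together with free amalgamation, which by the remark preceding the lemma is the identity $\delta(X \cup Y) = \delta(X) + \delta(Y) - \delta(X \cap Y)$. Self-sufficiency of $X \cup Y$ gives $\delta(X \cup Y) = d(X \cup Y)$. Substituting the preliminary equalities $\delta = d$ on $X$, $Y$, and $X \cap Y$ on the right-hand side then yields $d(X \cup Y) = d(X) + d(Y) - d(X \cap Y)$, which is the independence identity.

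For the $(\Rightarrow)$ direction, assume independence. Submodularity of $\delta$ together with the preliminary equalities produces the sandwich
\[
d(X \cup Y) \leq \delta(X \cup Y) \leq \delta(X) + \delta(Y) - \delta(X \cap Y) = d(X) + d(Y) - d(X \cap Y) = d(X \cup Y),
\]
so all inequalities are in fact equalities. Equality in the rightmost inequality is precisely free amalgamation of $X$ and $Y$ over $X \cap Y$. The equality $\delta(X \cup Y) = d(X \cup Y)$, combined with the definition of $d(X \cup Y)$ as the minimum of $\delta(Z)$ over $X \cup Y \subseteq Z \subseteq A$, forces $\delta(X \cup Y) \leq \delta(Z)$ for every such $Z$, i.e.\ $X \cup Y \leq A$.

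There is no real obstacle: the argument reduces to short bookkeeping once one observes that closed subsets are self-sufficient, after which submodularity does the rest, with free amalgamation and self-sufficiency appearing exactly as the two separate equality cases in the submodular sandwich.
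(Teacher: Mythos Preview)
Your proof is correct and follows essentially the same approach as the paper: both arguments hinge on the sandwich $d(X\cup Y)\leq\delta(X\cup Y)\leq\delta(X)+\delta(Y)-\delta(X\cap Y)=d(X)+d(Y)-d(X\cap Y)$, identifying equality in the first inequality with $X\cup Y\leq A$ and equality in the second with free amalgamation. The only difference is cosmetic: you treat the two directions separately and spell out why closed sets satisfy $\delta=d$, whereas the paper runs both directions at once via the single chain of inequalities.
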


\begin{proof}
Note that
$$d(X\cup Y) \leq \delta(X \cup Y) \leq \delta(X)+\delta(Y) - \delta(X\cap Y) = d(X)+d(Y)-d(X\cap Y)$$
(the last equality because $X$, $Y$, $X\cap Y$ are closed). The first inequality is an equality iff $X\cup Y \leq A$. The second inequality is an equality iff $X, Y$ are freely amalgamated over $X\cap Y$. Hence the result.
\end{proof}

A model theorist might refer to the following property as \textit{$CM$-triviality} of $\A$.

\begin{lem}\label{63} Suppose $\A = (A; R) \in PG(\C)$.
Suppose $A_1, A_2$ are closed in $\A$ and are independent over $A_1\cap A_2$.  Suppose $C$ is a closed in $\A$. Then $A_1\cap C$ and $A_2\cap C$ are independent over their intersection.
\end{lem}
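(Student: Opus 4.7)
The plan is to apply Lemma \ref{62} in both directions. Set $B_i := A_i \cap C$ for $i = 1, 2$, and observe that $B_1 \cap B_2 = A_1 \cap A_2 \cap C$. Since $A_1, A_2, C$ are closed, each of $B_1$, $B_2$, $B_1 \cap B_2$ is closed, being an intersection of closed sets. By Lemma \ref{62}, to show $B_1, B_2$ are independent over $B_1 \cap B_2$ it suffices to verify that $B_1 \cup B_2 \leq A$ and that $R\vert(B_1 \cup B_2) = R\vert B_1 \cup R\vert B_2$. Meanwhile, the hypothesis combined with Lemma \ref{62} in the other direction supplies $A_1 \cup A_2 \leq A$ and $R\vert(A_1 \cup A_2) = R\vert A_1 \cup R\vert A_2$.

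The free amalgamation of $B_1$ and $B_2$ is immediate from that of $A_1$ and $A_2$: a relation $r \in R$ with $r \subseteq B_1 \cup B_2$ is contained in $A_1 \cup A_2$, hence in some $A_i$ by free amalgamation of $A_1, A_2$; and $r \subseteq B_1 \cup B_2 \subseteq C$ then forces $r \subseteq A_i \cap C = B_i$.

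The self-sufficiency $B_1 \cup B_2 \leq A$ rests on the distributive identity
$$B_1 \cup B_2 = (A_1 \cap C) \cup (A_2 \cap C) = (A_1 \cup A_2) \cap C.$$
We already have $A_1 \cup A_2 \leq A$; and $C \leq A$ because $C$ is closed. (This last uses the principle---implicit in the proof of Lemma \ref{62}, where $d = \delta$ is invoked for closed sets---that closed subsets of $A$ are self-sufficient: if $X$ is closed and $Y^* \supseteq X$ is the smallest self-sufficient set containing $X$, then any $y \in Y^* \setminus X$ would satisfy $d(X \cup \{y\}) \leq \delta(Y^*) = d(X)$ by monotonicity of $d$, forcing $y \in \cl(X) = X$, a contradiction.) Property (3) of Section 1---the intersection of self-sufficient sets is self-sufficient---then gives $B_1 \cup B_2 = (A_1 \cup A_2) \cap C \leq A$. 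A second application of Lemma \ref{62} completes the argument.

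I anticipate no real obstacle: the proof is essentially set-theoretic once one notices the distributive identity $(A_1 \cup A_2) \cap C = (A_1 \cap C) \cup (A_2 \cap C)$ and the ``closed $\Rightarrow$ self-sufficient'' passage. The role of the strict gammoid hypothesis enters only through the existence of the presentation $(A; R)$ to which Lemma \ref{62} applies.
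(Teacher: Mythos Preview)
Your proof is correct and follows essentially the same route as the paper's: apply Lemma \ref{62} to the hypothesis, use the distributive identity $(A_1\cup A_2)\cap C=(A_1\cap C)\cup(A_2\cap C)$ to transfer both self-sufficiency and free amalgamation to the $B_i$, then apply Lemma \ref{62} again. The only cosmetic difference is that the paper reaches $(A_1\cup A_2)\cap C\leq A$ via property~(1) of Section~1 (giving $(A_1\cup A_2)\cap C\leq C$) together with $C\leq A$ and transitivity, whereas you invoke property~(3) directly on the two self-sufficient sets $A_1\cup A_2$ and $C$; both are equally valid.
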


\begin{proof} By Lemma \ref{61}, $A_1\cup A_2 \leq A$ and $A_1, A_2$ are freely amalgamated over their intersection. It follows that $(A_1\cap C)\cup (A_2\cap C) = (A_1\cup A_2)\cap C \leq C \leq A$ and $(A_1\cap C)$,  $(A_2\cap C)$ are freely amalgamated over their intersection. So the result follows by another application of Lemma \ref{61}.
\end{proof}

\begin{lem} \label{64} Suppose $\A = (A; R) \in PG(\C)$
If $B_1 \subseteq B$ are closed in $\A$ and $d(\a/B) = d(\a/B_1)$ then $\cl(\a B_1)\cap B = B_1$. 
\end{lem}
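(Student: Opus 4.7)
The inclusion $B_1 \subseteq \cl(\a B_1) \cap B$ is immediate since $B_1 \subseteq B$ and $B_1 \subseteq \cl(\a B_1)$, so the content is the reverse inclusion. The plan is to set $C = \cl(\a B_1)$ and use submodularity of the dimension function together with the hypothesis to show that $C \cap B$ is a closed subset of $B$ containing the closed set $B_1$ and having the same dimension as $B_1$, so must equal $B_1$.

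First I would reduce to the case where $B$ is finite-dimensional, since the hypothesis speaks of a dimension equality but $B$ itself could be infinite. By the finitary character of matroid dependence, there exists a finite-dimensional closed $B_1' \subseteq B_1$ with $d(\a/B_1') = d(\a/B_1)$, and then a finite-dimensional closed $B' \subseteq B$ containing $B_1'$ together with any prescribed finite subset of $B \cap \cl(\a B_1)$, and with $d(\a/B') = d(\a/B)$. Since it suffices to show every $x \in \cl(\a B_1) \cap B$ lies in $B_1$, and since membership in $\cl(\a B_1)$ is witnessed by a finite subtuple of $B_1$, we may assume from the start that $B$ and $B_1$ are finite-dimensional.

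Now work inside the finite-dimensional setting. The hypothesis $d(\a/B) = d(\a/B_1)$ rewrites as
\[ d(\a B) - d(B) = d(\a B_1) - d(B_1), \]
and since $\cl(C \cup B) = \cl(\a B)$ we get $d(C \cup B) = d(C) + d(B) - d(B_1)$. By submodularity,
\[ d(C \cap B) \leq d(C) + d(B) - d(C \cup B) = d(B_1), \]
and the reverse inequality is trivial from $B_1 \subseteq C \cap B$. Hence $d(C \cap B) = d(B_1)$. Since $C$ and $B$ are both closed, $C \cap B$ is closed, and $B_1 \subseteq C \cap B$ is an inclusion of closed sets of equal (finite) dimension; a basis of $B_1$ is then a basis of $C \cap B$, giving $B_1 = C \cap B$ as required.

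The main point requiring care is the reduction from the possibly infinite $B$ to finite-dimensional closed subsets without changing the relevant dimension data; once that is done the argument is a one-line application of submodularity and uses nothing particular about strict gammoids (the predimension $\delta$ and the notation from the presentation need not appear in the proof).
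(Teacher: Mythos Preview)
Your proof is correct, and like the paper's you end by showing that $X = \cl(\a B_1)\cap B$ is closed, contains $B_1$, and has $d(X) = d(B_1)$. The routes to this last equality differ slightly. You compute $d(C\cup B) = d(C) + d(B) - d(B_1)$ from the hypothesis and then apply submodularity of $d$ to the pair $C, B$. The paper instead uses the sandwich $B_1 \subseteq X \subseteq B$ to get $d(\a/X) = d(\a/B) = d(\a/B_1)$ by monotonicity of relative dimension, and then observes directly that $\cl(\a X) = \cl(\a B_1)$ (since $X \subseteq \cl(\a B_1)$), so $d(\a X) = d(\a B_1)$; subtracting gives $d(X) = d(B_1)$. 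Both arguments are pure pregeometry and, as you note, make no use of the strict-gammoid hypothesis. One small point: since $\A \in PG(\C)$ the underlying set $A$ is finite, so your reduction to the finite-dimensional case is unnecessary here (though harmless).
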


\begin{proof}
Let $X = \cl(\a B_1)\cap B$. As $B_1 \subseteq X \subseteq B$ we have 
\[d(\a X)-d(X) = d(\a/X) = d(\a/B) = d(\a/B_1) = d(\a B_1) - d(B_1).\]
Now, $\cl(\a X) = \cl(\a B_1)$, so we obtain $d(X) = d(B_1)$. So as $B_1$ and $X$ are closed, we have $X = B_1$.
\end{proof}

\medskip

\noindent\textit{Proof of Theorem \ref{61}.}
With the above notation, it will suffice to show that if $B_1, B_2$ are closed in $B$ with $d(\a/B_i) = d(\a/B)$ then $d(\a/B_1\cap B_2) = d(\a/B)$. Let $A_i = \cl(\a B_i)$ and $B_0 = B_1\cap B_2$.  By Lemma \ref{64}, $A_i \cap B = B_i$ for $i = 1, 2$.

Let $X = A_1\cap A_2 \supseteq \cl(\a B_0) \supseteq \cl(B_0)$. Now, $A_1$ and $B$ are independent over their intersection $B_1$, so by Lemma \ref{63}, $A_1 \cap A_2$ and $B \cap A_2$ are independent over their intersection. In other words, $X$ and $B_2$ are independent over $B_0$. 

Thus $d(\a/B) \leq d(\a/B_0) \leq d(X/B_0)  = d(X/B_2) = d(\a/B_2) = d(\a/B)$. We conclude that $d(\a/B_0) = d(\a/B)$ (and $X = \cl(\a B_0)$). \hfill $\Box$

\medskip

A similar style of argument gives the following.

\begin{thm}\label{65} Suppose $\A = (A; R) \in PG(\C)$ and $C \leq A$. If $F$ is a closed set  in the restriction $(C, \cl_C)$, denote by $\Ft$ its closure $\cl(F)$ in $\A$. If $F_1, F_2$ are closed in $(C, \cl_C)$, then $\Ft_1\cap \Ft_2 = \widetilde{F_1\cap F_2}$.
\end{thm}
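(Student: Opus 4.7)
The easy inclusion $\widetilde{F_1 \cap F_2} \subseteq \Ft_1 \cap \Ft_2$ is immediate from monotonicity of closure. For the reverse, my plan is to prove the dimension equality $d(\Ft_1 \cap \Ft_2) = d(F_1 \cap F_2)$; combined with $F_1 \cap F_2 \subseteq \Ft_1 \cap \Ft_2$ and the fact that $\Ft_1 \cap \Ft_2$ is closed in $\A$, this forces $\Ft_1 \cap \Ft_2 \subseteq \cl(F_1 \cap F_2) = \widetilde{F_1 \cap F_2}$.

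To obtain this dimension equality I imitate the strategy of Theorem \ref{61} (via Lemmas \ref{62}--\ref{63}). Since $F_i$ is closed in $(C,\cl_C)$ we have $\Ft_i \cap C = \cl_C(F_i) = F_i$, and since closed sets of a pregeometry are self-sufficient in any presentation, both $\Ft_i \leq A$ and $F_i \leq C \leq A$. The first key step is to show that $\Ft_1$ and $C$ are freely amalgamated over $F_1$ in the sense of Lemmas \ref{62}--\ref{63}: namely $\Ft_1 \cup C \leq A$ and $R\vert(\Ft_1 \cup C) = (R\vert \Ft_1) \cup (R\vert C)$. The input is the observation $\Ft_1 = \cl(F_1) \subseteq \cl(C)$, which pins down $d(\Ft_1 \cup C) = d(C)$; together with self-sufficiency of $\Ft_1$, $C$ and $F_1$ and submodularity of $\delta$, the chain
\[d(\Ft_1 \cup C) \leq \delta(\Ft_1 \cup C) \leq \delta(\Ft_1) + \delta(C) - \delta(F_1) = d(C)\]
collapses to equalities, yielding both $\Ft_1 \cup C \leq A$ (using $C \leq A$) and free amalgamation.

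With this in hand, I intersect with the closed set $\Ft_2$, exactly as in the proof of Lemma \ref{63}. Points (1) and (2) of Section 1, together with $\Ft_2 \leq A$, give $(\Ft_1 \cap \Ft_2) \cup F_2 = (\Ft_1 \cup C) \cap \Ft_2 \leq A$, and any $r \in R$ inside this union, being both inside $\Ft_1 \cup C$ and inside $\Ft_2$, must lie in $\Ft_1 \cap \Ft_2$ or in $C \cap \Ft_2 = F_2$. This is free amalgamation of $\Ft_1 \cap \Ft_2$ and $F_2$ over $\Ft_1 \cap F_2$, and the latter equals $F_1 \cap F_2$ since $\Ft_1 \cap F_2 \subseteq \Ft_1 \cap C = F_1$. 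Passing to $d$ (all four sets are self-sufficient in $A$ by points (1)--(3)) yields
\[d((\Ft_1 \cap \Ft_2) \cup F_2) = d(\Ft_1 \cap \Ft_2) + d(F_2) - d(F_1 \cap F_2),\]
and since $F_2 \subseteq (\Ft_1 \cap \Ft_2) \cup F_2 \subseteq \Ft_2 = \cl(F_2)$ the left-hand side equals $d(F_2)$, producing $d(\Ft_1 \cap \Ft_2) = d(F_1 \cap F_2)$ as required.

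The main subtlety is the free-amalgamation step for $\Ft_1$ and $C$: because $C$ is only assumed self-sufficient (not closed in $\A$), Lemma \ref{63} does not apply directly with $\Ft_1, C$ as the two closed sets. The workaround is the elementary observation $\Ft_1 \subseteq \cl(C)$, which fixes $d(\Ft_1 \cup C)$ and lets one squeeze the submodular inequality into an equality by hand; once this is done, the rest of the argument runs in parallel with Lemma \ref{63}.
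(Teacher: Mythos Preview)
Your argument is correct and follows the same strategy as the paper's proof: collapse the submodularity inequality to show that $\Ft_i$ and $C$ are freely amalgamated over $F_i$ with $\Ft_i\cup C \leq A$, then use this to obtain $\delta(\Ft_1\cap\Ft_2)=\delta(F_1\cap F_2)$ and hence the desired equality of closed sets. The only cosmetic difference is that you intersect $\Ft_1\cup C$ with $\Ft_2$ (getting a free amalgam of $Y=\Ft_1\cap\Ft_2$ and $F_2$ over $F_1\cap F_2$, sandwiched between $F_2$ and $\Ft_2$), whereas the paper uses both $\Ft_i\cup C$ to see directly that $Y\cup C$ is a free amalgam of $Y$ and $C$ over $F_1\cap F_2$ with $\delta(Y\cup C)=\delta(C)$.
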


\begin{proof}
Note that $\Ft_i \cap C = F_i$. As $C \leq A$ we have 
\[\delta(C) = \delta(\Ft_i \cup C) \leq \delta(\Ft_i)+\delta(C)-\delta(F_i) = \delta(C)\]
so we have equality and therefore $\Ft_i\cup C$ is the free amalgamation of $\Ft_i$ and $C$ over $F_i$. Let $Y = \Ft_1\cap \Ft_2$. It follows that $Y\cup C$ is the free amalgamation of $C$ and $Y$ over $F_1\cap F_2$. Thus as before  $\delta(C) = (C \cup Y) = \delta(C)+\delta(Y) - \delta(F_1\cap F_2)$. So $\delta(Y) = \delta(F_1\cap F_2)$ and it follows easily that $Y = \cl(F_1\cap F_2)$, as required.
\end{proof}

\begin{rem}\rm
It follows that, with the above notation, if $F_1,\ldots, F_r$ are closed sets in $(C,\cl_C)$, then $\cl(\bigcap_i F_i) = \bigcap_i \Ft_i$. Also, the dimension (computed in $C$) of $\bigcup_i F_i$ is equal to $d(\bigcup_i \Ft_i)$. Thus, if $\Delta^\A$ and $\Delta^C$ denote the function $\Delta$ from Definition \ref{41} in $\A$ and $(C, \cl_C)$ respectively, then 
\[\Delta^C(F_1,\ldots , F_r) = \Delta^\A(\Ft_1,\ldots, \Ft_r).\]
Applying Remarks \ref{44} to this equation we obtain:
\[\delta(\bigcup_i \Ft_i)-d(\bigcup_i \Ft_i)+ \rho(\Ft_1,\ldots, \Ft_r) =
\delta(\bigcup_i F_i)-d(\bigcup_i F_i)+ \rho(F_1,\ldots, F_r).\]
The second terms of each side are equal. As $C \leq A$ we have $\delta(\bigcup_i \Ft_i) \geq \delta(\bigcup_i F_i)$; as $(\bigcup_i \Ft_i) \cap C = \bigcup_i F_i$ we have $\rho(\Ft_1,\ldots, \Ft_r) \geq \rho(F_1,\ldots, F_r)$. Thus we have equality in both of these, and we can view $\bigcup_i \Ft_i$ as being `freely constructed' from the $\Ft_i$ (over $\bigcup_i F_i$). Further results on freeness in flat pregeometries (which do not depend on their characterization in terms of Hrushovski constructions) can be found in Holland's paper \cite{Holland}
\end{rem}

Suppose $(B_1; R_1), (B_2; R_2) \in \C$ (or even in $\Cb$) and $A \leq (B_i; R_i)$ (with $R_1\vert A = R_2\vert A$). We can assume $A = B_1\cap B_2$ and let $C = B_1\cup B_2$.  Consider $C$ as a set system with relations $R = R_1 \cup R_2$. Then it can be shown that $B_i \leq (C; R) \in \Cb$ and of course, $B_1, B_2$ are freely amalgamated over $A$ in $C$. So we refer to  $C$ as the free amalgam of $B_1$ and $B_2$ over $A$.

We note  that the pregeometry on $C$ in this free amalgam does not depend on the presentations $R_1, R_2$ of the pregeometries on $B_1$ and $B_2$. In fact, in the terminology of (\cite{Oxley}, 12.4), $C$ is \textit{the free amalgam} of the pregeometries $B_1$ and $B_2$ over $A$: if $C'$ is any other pregeometry on the set $C$ (the disjoint union of $B_1, B_2$ over $A$) which extends the given pregeometries on $B_1$ and $B_2$, then any set which is independent in $C'$ is independent in $C$. This can be seen by combining (\cite{Oxley}, 12.4.3 and 12.4.4) and Lemma \ref{62} here, but we summarise thse arguments as:

\begin{lem}\label{67} With the above notation, for $X \subseteq C$ let $d(X_{B_i})$ denote the dimension in $B_i$ of $X \cap B_i$ and likewise $d(X_A)$. Define $\eta(X) = d(X_{B_1})+d(X_{B_2}) - d(X_A)$ and $\zeta(X) = \min(\eta(Y) : X \subseteq Y \subseteq C)$. Then the dimension $d_C(X)$ of $X$ in $(C; R_1 \cup R_2)$ is equal to $\zeta(X)$. Moreover, if $C'$ is as above with dimension function $d_{C'}$ then $d_{C'}(X) \leq \zeta(X)$.
\end{lem}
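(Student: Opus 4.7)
The plan is to start by establishing an inclusion-exclusion decomposition of the predimension: for any finite $Y \subseteq C$, writing $Y_i = Y \cap B_i$ and $Y_A = Y \cap A$, I claim that
\[\delta(Y) = \delta_{B_1}(Y_1)+\delta_{B_2}(Y_2) - \delta_A(Y_A).\]
This is immediate from $|Y| = |Y_1|+|Y_2|-|Y_A|$ together with the analogous identity for the relations: since $R = R_1\cup R_2$ and any $r\in R_1\cap R_2$ lies in $R_1\vert A = R_2\vert A$, we have $|R\vert Y| = |R_1\vert Y_1|+|R_2\vert Y_2| - |(R_1\vert A)\vert Y_A|$.

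For the first assertion $d_C(X) = \zeta(X)$, I would establish the two inequalities separately using the formula $d_C(X) = \min\{\delta(Y) : X \subseteq Y \subseteq C\}$. For $d_C(X) \geq \zeta(X)$, choose $Y \supseteq X$ with $Y \leq C$ and $\delta(Y) = d_C(X)$. Since $B_i \leq C$, property (1) from Section 1 gives $Y_i \leq B_i$; and since $A \leq B_1$, the same property gives $Y_A = Y_1\cap A \leq A$. Therefore $\delta_{B_i}(Y_i) = d_{B_i}(Y_i)$ and $\delta_A(Y_A) = d_A(Y_A)$, so the decomposition yields $\delta(Y) = \eta(Y) \geq \zeta(X)$. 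Conversely, given any $Y \supseteq X$, pick $Y'_i \supseteq Y_i$ self-sufficient in $B_i$ with $\delta_{B_i}(Y'_i) = d_{B_i}(Y_i)$ and set $Y' = Y'_1 \cup Y'_2 \supseteq X$. Then
\[\delta(Y') = d_{B_1}(Y_1)+d_{B_2}(Y_2)-\delta_A(Y'\cap A) \leq d_{B_1}(Y_1)+d_{B_2}(Y_2)-d_A(Y_A) = \eta(Y),\]
since $Y_A \subseteq Y'\cap A$ implies $\delta_A(Y'\cap A) \geq d_A(Y'\cap A) \geq d_A(Y_A)$. Hence $d_C(X) \leq \eta(Y)$ for all $Y \supseteq X$, giving $d_C(X) \leq \zeta(X)$.

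For the moreover statement, let $C'$ be any pregeometry on $C$ extending the given pregeometries on $B_1$ and $B_2$ (and hence restricting correctly to $A = B_1\cap B_2$). Given any $Y \supseteq X$, submodularity of the dimension function $d_{C'}$ gives
\[d_{C'}(X) \leq d_{C'}(Y_1 \cup Y_2) \leq d_{C'}(Y_1)+d_{C'}(Y_2)-d_{C'}(Y_1\cap Y_2) = d_{B_1}(Y_1)+d_{B_2}(Y_2)-d_A(Y_A) = \eta(Y),\]
where the equality of $d_{C'}(Y_i)$ with $d_{B_i}(Y_i)$ (and of $d_{C'}(Y_A)$ with $d_A(Y_A)$) is precisely the hypothesis that $C'$ extends the given pregeometries. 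Minimising over $Y \supseteq X$ yields $d_{C'}(X) \leq \zeta(X)$.

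The work here is essentially mechanical once the decomposition $\delta(Y) = \delta_{B_1}(Y_1)+\delta_{B_2}(Y_2)-\delta_A(Y_A)$ is in hand; the only place requiring care is the $(\geq)$ inequality for $d_C(X) = \zeta(X)$, where one must justify simultaneously that the minimising $Y$ can be taken self-sufficient in $C$ and that this self-sufficiency propagates to $Y\cap B_i$ and $Y\cap A$. That propagation is, however, just an iterated application of property (1) of the $\leq$ relation from Section 1, so no serious obstacle arises.
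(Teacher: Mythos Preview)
Your argument for $d_C(X)\ge \zeta(X)$ and your ``moreover'' paragraph are both correct and essentially the same as the paper's. (One cosmetic slip: in the $\ge$ direction you invoke ``$B_i\le C$'' to get $Y_i\le B_i$, but the hypothesis that property~(1) actually uses is $Y\le C$ together with $B_i\subseteq C$; you already have $Y\le C$, so this is harmless.)

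There is, however, a genuine gap in your separate argument for $d_C(X)\le \zeta(X)$. With $Y'=Y'_1\cup Y'_2$ you assert
\[
\delta(Y') \;=\; d_{B_1}(Y_1)+d_{B_2}(Y_2)-\delta_A(Y'\cap A),
\]
but your decomposition formula gives $\delta(Y')=\delta_{B_1}(Y'\cap B_1)+\delta_{B_2}(Y'\cap B_2)-\delta_A(Y'\cap A)$, and in general $Y'\cap B_1=Y'_1\cup(Y'_2\cap A)$ can strictly contain $Y'_1$ (and symmetrically for $B_2$). Since $Y'_i\le B_i$, this only yields $\delta_{B_i}(Y'\cap B_i)\ge \delta_{B_i}(Y'_i)=d_{B_i}(Y_i)$, i.e.\ the inequality goes the \emph{wrong} way for your chain. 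So the displayed equality is not justified, and the bound $\delta(Y')\le\eta(Y)$ does not follow from your construction.

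The fix is already in your hands: your submodularity computation in the ``moreover'' part applies verbatim with $C'=C$, giving $d_C(X)\le\eta(Y)$ for every $Y\supseteq X$ and hence $d_C(X)\le\zeta(X)$. This is exactly how the paper argues: it proves $d_{C'}(X)\le\zeta(X)$ first and then specialises to $C'=C$, and for the reverse inequality takes $Z=\cl_C(X)$ (a particular self-sufficient $Y$) and observes $d_C(Z)=\eta(Z)$, just as you do. So once you drop the redundant direct construction of $Y'$, your proof coincides with the paper's.
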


\begin{proof} By submodularity of $d_{C'}$, if $Y \subseteq C$ then 
\[d_{C'}(Y) \leq d_{C'}(Y_{B_1})+d_{C'}(Y_{B_2}) - d_{C'}(Y_A) = \eta(Y).\]
Thus for $X \subseteq Y$ we have $d_{C'}(X) \leq d_{C'}(Y) \leq \eta(Y)$, so $d_{C'}(X) \leq \zeta(X)$. In particular, this is true with $C' = C$.

On the other hand, let $X \subseteq C$ and $Z$ its closure in $C$. Then $d_C(X) = d_C(Z)$ and by Lemma \ref{62} this is equal to $\eta(Z)$. By definition,  $\eta(Z) \geq \zeta(Z) \geq \zeta(X)$, so the result follows.
\end{proof}

\section{Questions and further observations}

\subsection{A classification of homogeneous flat geometries?}
The main purpose of Hrushovski's construction in \cite{Hr} was to provide  model theorists with examples of strongly minimal sets (or more generally, regular types) whose geometries were significantly different from those that were previoulsy known. Though the construction is by now very familiar, the class of geometries which arises has still seemed  to be somewhat mysterious. As a test problem, we might ask for a description of a class of geometries which includes the ones in \cite{Hr}, and a classification of them.

Until recently, it looked as though this was a hopeless problem for at least two reasons:
\begin{enumerate}
\item[(A)]  the construction appears to produce a wide variety of non-isomorphic countable dimensional geometries;
\item[(B)] one of the few properties we know about the geometries, namely flatness, does not look to be very convenient to use.
\end{enumerate}

However,  \cite{MFDE1} and \cite{MFDE2} (and \cite{MFThesis}) show that there are only countably many \textit{local} isomorphism types of countable dimensional geometries (of strongly minimal sets) produced in \cite{Hr}: by this we mean the isomorphism type of the  geometry obtained after localizing  over a finite set so that lines have infinitely many points. See the end of this subsection for a more precise statement. Furthermore, the point of these notes is to correct the impression in (B): Mason's Theorem shows us how flatness implies that the geometry arises from a predimension.

The (pre)geometry of a regular type (or a strongly minimal set) has a large automorphism group. The geometry is \textit{homogeneous}, meaning that the pointwise stabiliser of a finite dimensional flat acts transitively on the set of points outside the flat. So we might ask:

\begin{question}\label{71} \rm Is there a classification of the countable, infinite dimensional homogeneous flat geometries with  infinitely many points on a  line?
\end{question}

Possibly we should also include as a hypothesis here that our geometry is a direct limit of finite flat geometries, as in Question \ref{59}. 

However, this may still be too optimistic, so we give a weaker version of the question in terms of amalgamation classes of finite flat geometries.

Recall the notation $\P_k = (PG(\C_k), \unlhd_k)$ introduced at the end of Section 5. This is an \textit{amalgamation class} (\cite{MFDE1}, 6.4). By this we mean that if $B \unlhd_k A \in \P_k$ then $B \in \P_k$, and if $f_i : A \to B_i \in \P_k$ are $\unlhd_k$-embeddings (meaning $f_i(A) \unlhd_k B_i$) for $i = 1,2$, then there is $C \in \P_k$ and $\unlhd_k$-embeddings $g_i : B_i \to C$ with $g_1\circ f_1 = g_2\circ f_2$. Indeed, we can assume that the $f_i$ are inclusions and there are presentations $(B_i; R_i) \in \C_k$ of the $B_i$ with $A \leq (B_i; R_i)$. By (\cite{MFDE1}, 4.2) we may assume that $R_1$ and $R_2$ agree on $A$: we can replace $R_2\vert A$ by $R_1 \vert A $ in $R_2$ without changing the pregeometry on $B_2$. So then we can take $C \in \P_k$ to be the pregeometry of the free amalgam of $(B_1; R_1)$ and $(B_2; R_2)$ over $A$, as at the end of Section 6.

Suppose $(\H, \unlhd_k) \subseteq \P_k$ is also an amalgamation class. Then there is a pregeometry $P$ which is the union of finite subgeometries $P_i$ such that:
\begin{enumerate}
\item[(1)]  $P_1 \unlhd_k P_2 \unlhd_k P_3 \unlhd_k \ldots $;
\item[(2)] if $A \unlhd_k P_i$ and $A \unlhd_k B \in \H$ then there is $j\geq i$ and a $\unlhd_k$-embedding $h : B \to P_j$ with $h(a) = a$ for all $a \in A$.
\end{enumerate}
It can be shown that $P$ is determined up to isomorphism by $\H$ and we refer to it as the \textit{generic structure} associated with $\H$. Write $A \unlhd_k P$ to mean that $A \unlhd_k P_j$ for some $j$. If $A_1, A_2 \unlhd_k P$ and $\gamma : A_1 \to A_2$ is an isomorphism of pregeometries, there is an automorphism $f$ of $P$ which extends $\gamma$ (and has the property that for every $m$ there is $n \geq m$ such that $f(P_m) \unlhd_k P_n$, and therefore $f$ preserves $\unlhd_k$). We refer to this property as \textit{$\unlhd_k$-homogeneity}. Note that this differs from what one might expect as \textit{a priori} the set of pairs of substructures to which it applies appears to depend on the chain used in (1): see Section 6 of \cite{MFDE1} for further comments on this. In any case, we have the following:

\begin{lem} Suppose $\H \subseteq \P_k$ is an amalgamation class and $P$ is the generic structure assocated to $\H$. Then $P$ is a homogeneous pregeometry.
\end{lem}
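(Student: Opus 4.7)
The plan is to show that for any finite-dimensional flat $F$ of $P$ and any two points $p, q \in P \setminus F$, there is an automorphism of $P$ fixing $F$ pointwise and sending $p$ to $q$. Since $P$ is a countable pregeometry (being the union of the chain of finite $P_i$), I would proceed by a back-and-forth construction, enumerating $F = \{f_0, f_1, \ldots\}$ and $P = \{x_0, x_1, \ldots\}$.

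I would build an ascending chain of pregeometry isomorphisms $\sigma_n : A_n \to B_n$ between finite $\unlhd_k$-substructures of $P$, starting with $\sigma_0$ obtained by mapping a $\unlhd_k$-closure of $\{p\}$ in $P$ to a $\unlhd_k$-closure of $\{q\}$ via an isomorphism (any two such are isomorphic since $\{p\}$ and $\{q\}$ are both trivial one-point configurations). At each stage I would alternate: in forth/back steps, I would enlarge $A_n$ (or $B_n$) so that $x_n$ eventually lies in the common domain of the chain; and at intervening stages, I would enlarge both $A_n$ and $B_n$ so that $f_n$ enters $A_n \cap B_n$ with $\sigma_n(f_n) = f_n$. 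The union $\sigma = \bigcup_n \sigma_n$ would then be a pregeometry automorphism of $P$ with $\sigma|_F = \mathrm{id}_F$ and $\sigma(p) = q$.

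Each extension step would use the amalgamation property of $\H$ together with property (2) of the generic structure $P$. Given $\sigma_n : A_n \to B_n$ and an element $c$ to be accommodated, I would first take a $\unlhd_k$-closure in $P$ to obtain a finite $A' \unlhd_k P$ with $A_n \cup \{c\} \subseteq A'$ (and, in the fixed-point case $c = f_n$, similarly a finite $B' \unlhd_k P$ with $B_n \cup \{f_n\} \subseteq B'$). I would then amalgamate $A'$ and $B'$ (or, for forth/back, $A'$ and $B_n$) over the corresponding partial iso inside $\H$; this produces a structure in $\P_k$, which embeds as a $\unlhd_k$-substructure of $P$ extending both $A_n$ and $B_n$ via the universality property (2). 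Invoking $\unlhd_k$-homogeneity at the end (or at each stage) turns the union of the partial isos into a genuine automorphism.

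The main obstacle is the fixed-point extension: when adding $f_n \in F$, I must verify that $\sigma_n \cup \{(f_n, f_n)\}$ is consistent as a partial pregeometry isomorphism, i.e.\ that the pregeometric configuration of $f_n$ over $A_n$ matches, under $\sigma_n$, that of $f_n$ over $B_n$. This is where flatness enters via Lemmas \ref{62} and \ref{63}: since $f_n$ lies in the flat $F$ and $\sigma_n$ already fixes $F \cap A_n = F \cap B_n$, the $CM$-triviality of Lemma \ref{63} forces the interaction of $f_n$ with $A_n \setminus F$ (respectively $B_n \setminus F$) to be controlled by the common subset $A_n \cap F$, so the identification $f_n \mapsto f_n$ is coherent and the free amalgamation of Section 6 produces the required extension inside $\P_k$.
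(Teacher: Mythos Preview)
Your approach differs substantially from the paper's, and the fixed-point step contains a genuine gap.

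The paper's argument is short and direct. Given finite $A$ and $b_1, b_2 \notin \cl(A)$, it picks $i$ with $A \unlhd_k P_i$ and $b_1, b_2 \in P_i$, then chooses a presentation $(P_i; R) \in \C_k$ with $A \leq P_i$. The key observation is that $b_j \notin \cl(A)$ forces $A \cup \{b_j\} \leq P_i$ with no relation on $A \cup \{b_j\}$ involving $b_j$; hence the two induced set-systems on $A \cup \{b_j\}$ are isomorphic over $A$, and a single application of $\unlhd_k$-homogeneity produces an automorphism fixing $A$ pointwise and sending $b_1$ to $b_2$. The upgrade from fixing $A$ to fixing $\cl(A)$ is then handled separately by citing an argument from \cite{HDM}. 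No back-and-forth over $F$ is performed.

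Your back-and-forth attempts to handle the full (possibly infinite) flat $F$ directly, and this is where the problem lies. The appeal to Lemma~\ref{63} does not do the work you need: that lemma is about independence of \emph{closed} sets in the pregeometry, whereas your $A_n, B_n$ are $\unlhd_k$-substructures, and the two notions are not interchangeable. More seriously, the invariant $F \cap A_n = F \cap B_n$ with $\sigma_n$ the identity there is not preserved by your forth/back steps: when you enlarge $A_n$ to some $A' \unlhd_k P$ containing $x_n$, the set $A' \cap F$ may acquire new points of $F$, and the amalgamation-plus-embedding that produces $B'$ gives you no control over whether $B'$ contains those \emph{same} points of $F$, let alone whether the isomorphism fixes them. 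Without this, the claim that $\sigma_n \cup \{(f_n, f_n)\}$ extends to a pregeometry isomorphism between $\unlhd_k$-substructures is unjustified. The paper avoids all of this by working over the finite set $A$ rather than over $\cl(A)$, exploiting the concrete fact that a point outside $\cl(A)$ is ``free'' in any presentation witnessing $A \leq P_i$.
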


\begin{proof} Let $P_i$ be as in (1), (2) above. Suppose $A \subseteq P$ is finite and $b_1, b_2 \in P \setminus \cl(A)$. Take $i$ with $A \unlhd_k P_i$ and $b_1, b_2 \in P_i$. There is a presentation $(P_i; R) \in \C_k$ of $P_i$ with $A \leq P_i$ and as $b_1, b_2 \not\in \cl(A)$ it follows that $A\cup \{b_i\} \leq (P_i; R)$ and  $b_i$ is not contained in a relation in $R\vert A\cup \{b_i\}$. Thus the induced set-systems on the $A \cup \{b_i\}$ are isomorphic over $A$, and the same is therefore true of the pregeometries. It follows by $\unlhd_k$-homogeneity that there is an automorphism of $P$ which fixes all of $A$ and takes $b_1$ to $b_2$ (and which preserves $\unlhd_k$ with respect to the $P_i$). 

Finally we claim that there is an automorphism of $P$ which fixes all of $\cl(A)$ and takes $b_1$ to $b_2$. An argument similar to that given on pp. 96--97 of \cite{HDM} can be used.
\end{proof}

So the following is relevant to Question \ref{71}:

\begin{question}\rm What are the subclasses $\H$ of $\P_k = (PG(\C_k), \unlhd_k)$ which are amalgamation classes?
\end{question}

Here we might want to assume that there is no bound on the dimension or number of points on a line of the pregeometries in $\H$. We might also want to assume that $\H$ contains a $k$-cycle. Then we might optimistically conjecture that $\H = \P_k$. As a first step towards this question, one might consider those $\H$ which are closed under \textit{free} amalgamation.

\medskip

We review briefly some of the results in \cite{MFDE1} and \cite{MFDE2} using this terminology. Let $G_k$ be the generic structure for $\P_k$. Theorems 6.9 and 4.3 of \cite{MFDE1} show that these pregeometries are non-isomorphic for different values of $k$. Theorem 5.5 of \cite{MFDE1} shows that $G_k$ is isomorphic to its localization over any finite subset. The results of \cite{MFDE2} can be phrased as saying that for each of the countable, infinite dimensional homogeneous pregeometries constructed in \cite{Hr} there is a $k$ such that a some localization over a finite set  is isomorphic to $G_k$. 

\medskip

A rather more subtle type of question is:

\begin{question} \rm Formulate a natural model theoretic / combinatorial conjecture (or result) which has flatness as part of its conclusion.
\end{question}

\subsection{Gammoids}

A \textit{gammoid} is a submatroid of a strict gammoid, ie. a matroid which can be embedded in a matroid of the form $PG(A; R)$ for $(A; R) \in \C$. The following is a well-known open problem in matroid theory (see \cite{Oxley}, Problem 14.7.1).

\begin{question}\rm
Is there an algorithm to test whether or not a given matroid is a gammoid?
\end{question}

Note that Corollary 3.9 of \cite{MFDE1} shows that class of submatroids of matroids in $PG(\C_k)$ is independent of $k$ (for $k \geq 3$). 

\medskip

Somewhat speculatively, we ask:

\begin{question}\rm 
If a gammoid has weak canonical bases over closed sets (as in Section 6), is it a strict gammoid?
\end{question}

\end{document}